\documentclass[a4paper,12pt]{article}

\usepackage[english]{babel}

\usepackage[letterpaper,top=2cm,bottom=2cm,left=1.5cm,right=1.5cm,marginparwidth=1.75cm]{geometry}
\usepackage{graphicx}
\usepackage{mathtools}
\usepackage{amssymb}
\usepackage{tikz-cd}
\usepackage{authblk}
\usepackage[colorlinks=true, linkcolor=red, citecolor=blue, backref=page]{hyperref}
\usepackage{graphicx}
\usepackage{amsmath, amssymb, amsthm, mathtools, cancel, graphicx, wrapfig, forest, verbatim, amsfonts, bbm, caption, subcaption}

\renewcommand*{\backref}[1]{}
\renewcommand*{\backrefalt}[4]{%
  \ifcase #1 %
  \or
    ↑#2.%
  \else
    ↑#2.%
  \fi
}


\newtheorem{theorem}{Theorem}
\newtheorem{proposition}[theorem]{Proposition}
\newtheorem{corollary}[theorem]{Corollary}
\newtheorem{remark}[theorem]{Remark}
\newtheorem{definition}[theorem]{Definition}
\newtheorem{lemma}[theorem]{Lemma}
\newtheorem*{theorem-non}{Theorem}

\newcommand{\QQ}{\mathbb{Q}}
\newcommand{\ZZ}{\mathbb{Z}}
\newcommand{\CC}{\mathbb{C}}

\title{An archimedean approach to singular moduli on Shimura curves}
\date{}
\begin{document}
\author[M. Crabit Nicolau]{Mateo Crabit Nicolau\footnote{Sorbonne Université, Université Paris Cité, CNRS, IMJ-PRG, F-75005 Paris, France.\\ Gorlaeus Building, Einsteinweg 55, 2333 CC, Leiden. } }
\maketitle
\abstract{
We give a new proof of a recent generalization to Shimura curve of genus 0 of the work of Gross and Zagier in `On singular moduli'. This generalization  was conjectured by Giampietro and Darmon and proved by Daas by using $p$-adic $\Theta$-functions as an analogue of the $j$-invariant. Instead of working $p$-adically, we prove this result by evaluating Green's function at CM points on the Shimura curve. Our strategy is inspired by the analytic proof of Gross--Zagier. We put a special emphasis on both the similarities and the differences with the $p$-adic proof. 
}
{
  \hypersetup{linkcolor=black}
  \tableofcontents
}
\section{Introduction}
Let $j(\tau)$ denote Klein's $j$ invariant 
\[j(\tau)=q^{-1}+744+196884q+21493760q^2+...=q^{-1}+\sum_{n\geq 0}c_nq^n,\]
with $q=e^{2i\pi\tau}$ and $c_n\in\mathbb{Z}$. This holomorphic function defined on the upper half plane \[\mathbf{H}=\{\tau\in\mathbb{C},\textup{Im}(\tau)>0\}\]
is invariant under $\mathrm{SL}_2(\ZZ)$ acting by M\"{o}bius transformations on $\mathbf{H}$. A remarkable property is that when $\tau$ lies in an imaginary quadratic field $K$, the value $j(\tau)$ is an algebraic integer in an abelian extension of $K$.
In this setting, we call $j(\tau)$ a singular modulus.
\par
In \cite{GrossZagier84}, Gross and Zagier looked at the difference of two singular moduli 
 $j(\tau_1)-j(\tau_2)$ taken at CM points of coprime fundamental discriminants $D_1$ and $D_2$. This quantity is an algebraic integer and lives in the compositum of the Hilbert class fields of $\mathbb{Q}(\tau_1)$ and $\mathbb{Q}(\tau_2)$. The particularly smooth factorization appearing in a multitude of examples such as 
\begin{equation}
j\biggl(\frac{1+i\sqrt{163}}{2}\biggl)-1728=-2^6\cdot3^6\cdot7^2\cdot11^2\cdot19^2\cdot127^3\cdot163,
\end{equation}
or
\begin{equation}\label{vaj j 67,163}
j\biggl(\frac{1+i\sqrt{67}}{2}\biggl)- j\biggl(\frac{1+i\sqrt{163}}{2}\biggl)=-2^{15}\cdot3^7\cdot5^3\cdot7^2\cdot13\cdot139\cdot331,
\end{equation}
 prompted Gross and Zagier to study the arithmetic properties of these factorizations.
\par
We fix the following notation to state their Theorem. They will be used throughout this article. 
Let $D_1,D_2$ be two negative coprime fundamental discriminants and $D=D_1D_2$. Let $K_i=\mathbb{Q}(\sqrt{D_i})$ be an imaginary quadratic field with ring of integers $\mathcal{O}_i$ for $i=1,2$. Let $F=\mathbb{Q}(\sqrt{D})$ a real quadratic field with ring of integers $\mathcal{O}_F$, and $\chi$ the genus character of $F$ associated with the decomposition $D=D_1D_2$. We have the following diagram.
\begin{center}
\begin{tikzpicture}

    \node (Q1) at (0,0) {$\mathbb{Q}$};
    \node (Q2) at (1.3,1.3) {$K_2$};
    \node (Q3) at (0,2.6) {$L\coloneqq K_1K_2$};
    \node (Q4) at (-1.3,1.3) {$K_1$};
    \node(Q5) at  (0,1.3) {$F$};

    \draw (Q1)--(Q2);
    \draw (Q1)--(Q4);
    \draw (Q3)--(Q4);
    \draw (Q2)--(Q3);
    \draw (Q1)--(Q5);
    \draw (Q5)--(Q3) node [pos=0.5, right,outer sep=0.cm] {$\chi$};

    \end{tikzpicture}   
\end{center}
\begin{theorem}[Gross--Zagier]\label{GZ}
     Let $w_i$ be the order of the group of units $\mathcal{O}_{i}^\times$ and 
    \begin{equation}
   \mathcal{J}(D_1,D_2)\coloneqq \prod_{\substack{[\tau_1],[\tau_2]\\ \textup{disc}(\tau_i)=D_i}}(j(\tau_1)-j(\tau_2))^{\frac{4}{w_1w_2}}, 
   \end{equation}
where the product is taken over the $\mathrm{SL}_2(\mathbb{Z})$-orbits of points $\tau_1$,$\tau_2$ of discriminants $D_1$ and $D_2$ respectively. For a prime $l$ with $\bigl(\frac{D}{l}\bigl)\neq -1$, define 
\[\epsilon(l)\coloneqq \begin{cases}
      \bigl(\frac{D_1}{l}\bigl) & \text{if  } (l,D_1)=1,\\
      \bigl(\frac{D_2}{l}\bigl) & \text{if  } (l,D_2)=1.
      \end{cases}\]
If $n=\prod l_i^{\alpha_i}$ with $\bigl(\frac{D}{l_i}\bigl)\neq -1$ for all $i$, we define $\epsilon(n)=\prod \epsilon(l_i)^{\alpha_i}$. Then
\begin{equation}
\mathcal{J}(D_1,D_2)^2=\pm \prod_{\substack{x^2<D\\ x^2\equiv D\bmod{4}}}\mathbf{F}\biggl(\frac{D-x^2}{4}\biggl)
\end{equation}
where 
\begin{equation}
\mathbf{F}(m)=\prod_{\substack{nn'=m\\ n,n'>0}}n^{\epsilon(n')}.
\end{equation}
\end{theorem}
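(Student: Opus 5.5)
We follow the analytic route of Gross--Zagier: compute $\log|\mathcal{J}(D_1,D_2)|^2$ as a limit of Green's function values at CM points, and match the result with the logarithm of the right-hand side. The sign $\pm$ is invisible at the archimedean level and will be handled separately. This is legitimate because $\mathcal{J}(D_1,D_2)^2$ is a rational integer, and rationality follows from Galois invariance of the CM cycles.

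\textbf{Green's function and its restriction.} Let $G_s(\tau,\tau')$ be the automorphic Green's function on $\mathrm{SL}_2(\ZZ)\backslash\mathbf{H}$. It is obtained by averaging $Q_{s-1}\bigl(1+\frac{|\tau-\tau'|^2}{2\,\mathrm{Im}\tau\,\mathrm{Im}\tau'}\bigr)$ over $\mathrm{SL}_2(\ZZ)$, where $Q_{s-1}$ is the Legendre function of the second kind. We use the identity
\[\log|j(\tau)-j(\tau')|^2=\lim_{s\to1}\bigl(G_s(\tau,\tau')+4\pi E(\tau,s)+4\pi E(\tau',s)-c(s)\bigr)\]
for explicit Eisenstein series $E$ and an explicit constant $c(s)$. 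Summing over CM points of discriminants $D_1$ and $D_2$ with weights $2/w_i$ turns the Eisenstein contributions into Epstein zeta functions. Kronecker's limit formula identifies these as $\zeta_{K_i}$ terms, and they cancel in the limit. The main term is then $\sum_{[\tau_1],[\tau_2]}G_s(\tau_1,\tau_2)$.

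\textbf{Orbit counting.} A pair $(\tau_1,\gamma\tau_2)$ corresponds to a pair of binary quadratic forms. The hyperbolic distance between the two CM points is a rational function of an invariant: writing the pair as an element of a lattice in $M_2(\ZZ)$, the invariant is $x^2<D$ with $x\equiv D\bmod 2$, and $1+\frac{|\tau_1-\gamma\tau_2|^2}{2\,\mathrm{Im}\,\mathrm{Im}}=\frac{D+x^2}{\ldots}$, essentially $\frac{\sqrt{D}+x}{\sqrt D-x}$-type expressions. So $\sum G_s=\sum_{x}\rho\bigl(\frac{D-x^2}{4}\bigr)\,Q_{s-1}(x/\sqrt{D})$. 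Here $\rho(m)$ counts ideals of norm $m$ in $\mathcal{O}_F$ weighted by the genus character $\chi$, that is, $\rho(m)=\sum_{nn'=m}\epsilon(n')$-type multiplicities. The coefficient $\rho(m)$ is interpreted as the number of optimal embeddings of $\mathcal{O}_1\otimes\mathcal{O}_2$ into an Eichler-type order associated to $m$. This is the step where the genus character and the definition of $\epsilon$ enter. We verify $\epsilon$ is well defined using coprimality of $D_1,D_2$ and $\bigl(\frac{D}{l}\bigr)\ne-1$.

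\textbf{Main obstacle.} The difficulty is the limit $s\to1$. Each individual term has a pole, $Q_0(t)=\frac12\log\frac{1+t}{1-t}$, and the sum over $x$ is finite but the counting function includes the divergent Eisenstein pieces. We must show the total derivative produces exactly $\sum_x\sum_{nn'=m}\epsilon(n')\log n$. Our approach is a Zagier-type trick. We write $\rho(m)$ as a divisor sum and use the functional equation of $L(s,\chi)=L(s,\chi_{D_1})L(s,\chi_{D_2})$ to show that $\sum_{nn'=m}\epsilon(n')$ vanishes for each relevant $m$. This vanishing is exactly what forces the $Q_0$ contributions to cancel. It is a parity argument: some prime $l\mid m$ has $\epsilon(l)=-1$ to odd power, which uses $x^2<D$ and the quadratic reciprocity product formula $\prod_l(\cdot)$. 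The surviving first-order term in $s$ is then $\sum_{nn'=m}\epsilon(n')\log n$, which gives $\log|\mathbf{F}(m)|$. Summing over $x$ yields the archimedean identity $\log|\mathcal{J}|^2=\sum_x\log\mathbf{F}\bigl(\frac{D-x^2}{4}\bigr)$. Integrality then fixes the equality up to sign.
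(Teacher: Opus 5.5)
There is a genuine gap, and it sits in your orbit-counting step. Let $\tau_1$ and $\gamma\tau_2$ be the roots of the forms $[a_1,b_1,c_1]$ and $[a_2,b_2,c_2]$, of discriminants $D_1,D_2$, with $a_i>0$. A direct computation gives $\cosh d(\tau_1,\gamma\tau_2)=x/\sqrt{D}$, where $x=2a_1c_2+2a_2c_1-b_1b_2\equiv D \pmod 2$. Since $\cosh d\ge 1$ and $D$ is not a square, \emph{every} pair has $x^2>D$. For example, $\tau_1=e^{2\pi i/3}$ and $\tau_2=i$ give $x=4$ and $D=12$. So, up to constants, $\sum_{[\tau_1],[\tau_2]}G_s(\tau_1,\tau_2)$ is the infinite series $\sum_{x>\sqrt D}\rho\bigl(\tfrac{x^2-D}{4}\bigr)Q_{s-1}(x/\sqrt D)$. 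It is indexed by $\nu\in\delta_F^{-1}$ of trace $1$ and \emph{negative} norm. This is the paper's $c_1$, not a finite sum over $x^2<D$. Its pole at $s=1$ comes from the divergence of the series, since each $Q_{s-1}(t)$ with $t>1$ is regular at $s=1$. Your ``Main obstacle'' paragraph is therefore analyzing the wrong object.

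The missing idea is what converts this infinite sum into the finite sum over $x^2<D$. The weights $\sum_{nn'=m}\epsilon(n')\log n$ cannot come out of $Q_{s-1}$. They are $s$-derivatives of divisor sums $\sum_{\mathfrak a}\chi(\mathfrak a)\mathrm{N}(\mathfrak a)^{s}$ at totally positive $\nu$, i.e.\ Fourier coefficients of $\partial_s$ of the Hilbert Eisenstein series of $F$ attached to $\chi$ (the paper's $b_1$). The original analytic proof, which the paper adapts in Proposition~\ref{thm mod} and Corollary~\ref{cor}, proceeds as follows. It restricts $\partial_sE_s(z,z)|_{s=0}$ to the diagonal and takes its holomorphic projection. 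The absence of weight-$2$ forms of level one then forces the first coefficient to vanish, which is exactly the identity $b_1=c_1$. The Kronecker-limit constants from your Eisenstein corrections ($L'(1,\chi_{D_i})/L(1,\chi_{D_i})$, $\log|D_i|$, \dots) do not simply cancel. They must be matched against the constant-term contribution in that identity.

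Your parity observation is correct. For $x^2<D$, $\nu=\frac{x+\sqrt D}{2\sqrt D}\gg0$ and $\chi(\delta_F)=-1$ give $\epsilon(m)=-1$, hence $\rho(m)=0$, and no functional equation is needed. But this only explains why the Eisenstein coefficient vanishes at $s=0$, so that its derivative is $\log\mathbf F(m)$. It does not relate $\mathbf F(m)$ to values of the Green's function, which is the whole content of the theorem. Without this modularity input, or the alternative algebraic route via supersingular reduction, the argument does not close.
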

 The quantity $\mathbf{F}(m)$ appearing in the Theorem is always a prime power. More precisely, $\mathbf{F}(m)\neq 1$ if and only if  there is only one prime $\ell\mid m$ such that $\epsilon(\ell)=-1$ and $v_\ell(m)$ is odd.
 \\
 \par
We state a generalization to Shimura curves of the work of \cite{GrossZagier84}. This generalization was conjectured by Giampietro and Darmon in \cite{GiampetroDarmon} and proved by Daas in \cite{Daasarticle}. Our work will rely on some results of Daas, we will refer to their PhD thesis \cite{Daas} where more details are given.

\par Let $B_N/\mathbb{Q}$ be an indefinite quaternion algebra with discriminant $N>0$. Let  $R$ be a maximal order of $B_N$. Because $B_N$ is indefinite, $R$ is unique up to conjugation. Let $R^1$ be the set of elements of norm $1$ in $R$. By choosing an isomorphism  ~$B_N\otimes\mathbb{R} \simeq M_2(\mathbb{R})$, $R^1$ acts by Möbius transformation on the upper half plane.
We define the Shimura curve 
\[X_N(\mathbb{C})\coloneqq R^1\backslash\mathbf{H},\]
which has the structure of an algebraic curve defined over $\mathbb{Q}$. It is of genus $0$ if and only if $N\in \{1,6,10,22 \}$. 
Fix $N\in\{6,10,22 \}$ and write $N=pq$. Because $X_N$ is of genus 0, we can choose an isomorphism  $j_N : X_N \stackrel{\sim}{\longrightarrow} \mathbb{P}^1$ defined over $\QQ$.  Unlike the case $N=1$, $X_N$ is compact so there are no cusps. One cannot use Fourier series to make a natural choice of $j_N$ like Klein's $j$-function.
\par
We define CM-points on $X_N$ as follows. Let $K$ be an imaginary quadratic field such that $p$ and $q$ are inert in $K$. In this case, we can find an embedding $\alpha:\mathcal{O}_K\rightarrow R$, giving an action of $\mathcal{O}_K$ on  $\mathbf{H}$ by Möbius transformation for which there is a unique fixed point $\tau\in\mathbf{H}$. The image $P\in X_N(\CC)$ of $\tau$ is called the CM-point associated to the embedding $\alpha$.
\par
As explained in the first pages of \cite{Shimura}, for any CM-points $P\in X_N(\CC)$, the value $j_N(P)$ is defined over the Hilbert class field $H$ of $K$ by the Shimura reciprocity law. We fix now an embedding $\overline{\QQ}\hookrightarrow\CC.$
\par
 We suppose that $p$ and $q$ are inert in $K_1$ and $K_2$. Our setting is now quite close to the case $N=1$ of Gross and Zagier. After extensive numerical experimentation, Giampietro and Darmon \cite{GiampetroDarmon} gave a precise conjecture of the formula for the factorization of the norm of a cross ratio of such $j_N(\tau)$. More precisely, let ${\pm a,\pm b}$ be the four square roots of $D$ modulo $2N$ and define for $x\in\mathbb{Z}$ such that $x^2\equiv D\,[4N]$,
\begin{equation}
\delta(x)\coloneqq \begin{cases}
      +1 & \text{if  } x\equiv \pm a \,[2N],\\
      -1 & \text{if  } x\equiv \pm b\, [2N].
      \end{cases}
\end{equation}
Then, with $w_p$ the Atkin--Lehner involution at $p$ on $X_N$ (see Equation (\ref{atkin lehner def})),  define $P'\coloneqq w_p(\text{Frob}_p(P))$ where $\text{Frob}_p$ is the Frobenius at $p$ in the CM field $H$ for a now fixed embedding $H\hookrightarrow \QQ_{p^2}$. The quantity we are interested in is 
\begin{equation}
\mathcal{J}_N(D_1,D_2)\coloneqq \textup{Nm}_\mathbb{Q}^{H_1H_2}\biggl(\frac{j_N(P_1)-j_N(P_2)}{j_N(P_1')-j_N(P_2)}\cdot
\frac{j_N(P_1')-j_N(P_2')}{j_N(P_1)-j_N(P_2')}\biggl)^{\frac{ 2}{w_1w_2}}.
\end{equation} 
We can now state the conjecture of Giampietro and Darmon proved by Daas. Note the striking resemblance with Theorem$\,$\ref{GZ}. 
\begin{theorem}[Daas]\label{padicgz}
    Let $N=pq\in\{6,10,22\}$, let $\alpha_i$ be an embedding of $\mathcal{O}_i$ and $P_i$ the CM point attached to $\alpha_i$. Then, with the same $\mathbf{F}$ as Theorem \ref{GZ},
    \begin{equation}
\mathcal{J}_N(D_1,D_2)^{\pm1}=\pm\prod_{\substack{x^2<D\\x^2\equiv D \bmod{4N}}
    }\mathbf{F}\biggl(\frac{D-x^2}{4N}\biggl)^{\delta(x)}. \end{equation}
\end{theorem}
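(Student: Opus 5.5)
Since $\mathcal{J}_N(D_1,D_2)$ is a norm down to $\QQ$, it is a nonzero rational number, and a rational number is determined up to sign by its archimedean absolute value. The statement therefore reduces to the identity
\[
\log|\mathcal{J}_N(D_1,D_2)| \;=\; \pm\sum_{\substack{x^2<D\\ x^2\equiv D \bmod 4N}}\delta(x)\log \mathbf{F}\biggl(\frac{D-x^2}{4N}\biggr)
\]
for the complex absolute value under the fixed embedding $\overline{\QQ}\hookrightarrow\CC$. This is why a purely archimedean argument yields the exact statement, and the global $\pm1$ exponent is the same as the $\pm$ above.

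\textbf{Step 1: the twisted points $P'$.} Since $p$ is inert in $K_i$, the ideal $p\mathcal{O}_i$ is principal, so $p$ splits completely in $H_i/K_i$. Hence $\mathrm{Frob}_p$ is an involution of $H_i/\QQ$ lying outside $\mathrm{Gal}(H_i/K_i)$. By Shimura reciprocity it sends $P_i$ to a CM point attached to an embedding of $\mathcal{O}_i$ with the opposite orientation. So $P_i'=w_p(\mathrm{Frob}_p P_i)$ is again a CM point of discriminant $D_i$, whose optimal embedding differs from that of $P_i$ by the orientation at $p$ composed with $w_p$.

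Taking the norm, $\log|\mathcal{J}_N|$ becomes $\frac{2}{w_1w_2}$ times a signed sum over all pairs $(Q_1,Q_2)$ of CM points of discriminants $(D_1,D_2)$ of the quantity
\[
\log|j_N(Q_1)-j_N(Q_2)|-\log|j_N(Q_1')-j_N(Q_2)|-\log|j_N(Q_1)-j_N(Q_2')|+\log|j_N(Q_1')-j_N(Q_2')|.
\]

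\textbf{Step 2: pass to a Green's function.} On the genus-0 curve $X_N$ we have $\log|j_N(x)-j_N(y)| = G(x,y)+f(x)+f(y)$, where $G$ is the Green's function and $f$ depends only on $j_N$ and the normalization. In the cross ratio all the $f$-terms cancel. Hence $\log|\mathcal{J}_N|$ equals $\frac{2}{w_1w_2}\sum[G(Q_1,Q_2)-G(Q_1',Q_2)-G(Q_1,Q_2')+G(Q_1',Q_2')]$, and the dependence on the choice of $j_N$ disappears.

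\textbf{Step 3: unfold over the lattice.} Write $G(\tau_1,\tau_2)=\sum_{\gamma\in R^1}Q_0\bigl(1+|\tau_1-\gamma\tau_2|^2/(2\,\mathrm{Im}\tau_1\,\mathrm{Im}\gamma\tau_2)\bigr)$, regularized as $Q_{s-1}$ at $s\to1$. Following Gross--Zagier, pairs of optimal embeddings $(\alpha_1,\alpha_2)$ modulo $R^1$ correspond to elements $\alpha_1(\sqrt{D_1})\gamma\alpha_2(\sqrt{D_2})\gamma^{-1}$ of trace $x$ and reduced norm data $(D-x^2)/4N$. Here the factor $N$ and the congruence $x^2\equiv D\bmod 4N$ arise because the relevant lattice in $B_N$ has discriminant $N$.

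Grouping the terms by $x$, each term becomes a local-density product:
\begin{itemize}
\item at primes $\ell\nmid N$, the local counts are exactly those of Gross--Zagier and produce $\mathbf{F}((D-x^2)/4N)$;
\item at $p$ and $q$, the local embedding condition records the orientation, i.e.\ the class of $x$ modulo $2N$.
\end{itemize}
The four terms of the cross ratio differ exactly by changing the orientations at $p$ via $w_p\circ\mathrm{Frob}_p$. I expect their combination to contribute $+1$ on the class $x\equiv\pm a$ and $-1$ on the class $x\equiv\pm b \bmod 2N$, which is the sign $\delta(x)$. The archimedean $Q_{s-1}$ parts, and the divergent constants from regularizing at $s=1$, are identical for the four terms and should cancel, just as the non-arithmetic pieces cancel in the analytic proof of Theorem~\ref{GZ}.

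\textbf{Step 4: identify the finite part.} To make the finite part rigorous, I would realize the generating series as the diagonal restriction of the derivative at $s=0$ of the weight-one Hilbert Eisenstein series over $F$ with genus character $\chi$. I would then use Siegel--Weil for the quaternionic lattice, since $X_N$ is compact and has no cusps, and match Fourier coefficients via holomorphic projection. The $\log \mathbf{F}$ terms come from the derivatives of the local Whittaker functions, as in \cite{GrossZagier84}.

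\textbf{Main obstacle.} I expect the hardest step to be the bookkeeping at $p$ and $q$: proving that the four-term combination cancels exactly for the non-arithmetic parts and produces precisely $\delta(x)$, with no spurious local factors at $p$ and $q$. I would first verify this on the local lattice $R\otimes\ZZ_p$, computing explicitly how $w_p$ and the orientation change act on optimal embeddings.
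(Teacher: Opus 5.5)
Your Step 3 does not work as stated. When you unfold $G_N(\tau_1,\tau_2)$ over $\gamma\in R^1$, the relevant quantity is $\nu=\textup{det}_{F,\alpha}(\gamma)$. This is an element of $\delta_F^{-1}\mathbf{p}\mathbf{q}$ of trace $1$, where $\mathbf{p}\mathbf{q}$ is the reflex ideal of $\alpha$. Because $B_N$ is indefinite, it always satisfies $\nu<0<\nu'$. Writing $\nu=\frac{x+\sqrt{D}}{2\sqrt{D}}$, this means $x^2>D$.

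So the unfolded Green's function is an infinite sum, over $x^2>D$ only, of terms $\sigma_{0,\chi}\bigl((\delta_F^{-1}\mathbf{p}\mathbf{q})^{-1}(\nu)\bigr)Q_{s-1}(1-2\nu)$. Your ``norm data'' $(D-x^2)/4N$ is negative for every term. The local-density count producing $\mathbf{F}\bigl(\frac{D-x^2}{4N}\bigr)$ belongs to totally positive $\nu$, i.e.\ $x^2<D$. Those are represented by a \emph{definite} quaternion algebra, as in the supersingular computation of the algebraic proof, and never by $R^1$.

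Nor do the $Q_{s-1}$ parts cancel among the four terms; only the pole $\mathrm{Res}_{G_N}/(s-1)$ is independent of the points. The four pairs carry the four reflex ideals $\mathbf{p}_1\mathbf{q}_1,\mathbf{p}_2\mathbf{q}_2,\mathbf{p}_2\mathbf{q}_1,\mathbf{p}_1\mathbf{q}_2$ (Lemma~\ref{atkin lehner reflex prime}), so the $\nu$ run over different lattices. After summing over $\textup{Pic}(K_1)\times\textup{Pic}(K_2)$ with the exact count of Proposition~\ref{counting thm}, the signed combination is $-\frac{w_1w_2}{4}c_1$. This is the entire archimedean side, and it is not zero; if it cancelled, $\log|\mathcal{J}_N|$ would vanish.

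What you need is an identity between an infinite sum of Legendre-function values and a finite sum of logarithms of primes. No term-by-term matching can produce it, and this is the missing idea. In the paper it is Corollary~\ref{cor}. Take the holomorphic projection $f$ of the diagonal restriction of $\frac{\partial}{\partial s}$ of the signed combination $E_N$ of four Hilbert Eisenstein series, with $t_\lambda=\delta_F^{-1}\mathbf{p}_i\mathbf{q}_j$ so that $\chi(t_\lambda)=-1$. Its first coefficient is $a_1=b_1-c_1$. Here $b_1$ is the finite sum over $\nu\gg0$ weighted by $\sigma'_\chi$, and equals $-\log\prod\mathbf{F}\bigl(\frac{D-x^2}{4N}\bigr)^{\delta(x)}$ (Proposition~\ref{prop b1}). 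Finally $f=0$, because $S_2(\Gamma_0(N))=0$ for $N=6,10$ and $f$ is $W_N$-invariant for $N=22$.

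Your Step 4 names the Eisenstein series and holomorphic projection, but only to ``make the finite part rigorous.'' It never uses the vanishing of the target space, which is where $N\in\{6,10,22\}$ enters beyond the existence of $j_N$. Without it you only get $a_1=b_1-c_1$ for an unknown cusp form; for general $N$, $a_1$ is a full N\'eron height pairing (Theorem~\ref{f heights}). Siegel--Weil does not supply this either: the relevant Eisenstein series vanish at $s=0$. What matches the Green's function sum with $c_1$ is the exact representation count $\sigma_{0,\chi}$ for $\textup{det}_{F,\alpha[\sigma_1,\sigma_2]}$.

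If you want to avoid modular forms, the genuine alternative is different from your Step 3. Apply the product formula to the rational norm defining $\mathcal{J}_N$. Then compute each $\ell$-adic valuation by intersecting CM points on an integral model of $X_N$, which is where your $x^2<D$ count really lives. That route requires treating the bad reduction at $p$ and $q$.
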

Taking $N=6$, $D_1=-67$ and $D_2=-163$, Giampietro and Darmon computed
\begin{equation}
\textup{Nm}_\mathbb{Q}^{H_1H_2}\biggl(\frac{j_N(P_1)-j_N(P_2)}{j_N(P_1')-j_N(P_2)}\cdot
\frac{j_N(P_1')-j_N(P_2')}{j_N(P_1)-j_N(P_2')}\biggl)^4=\biggl(\frac{79^2\cdot101^2\cdot233^2}{5^2\cdot13^4\cdot31^2\cdot67^2}\biggl)^2.
\end{equation}
In \cite{Errthum_2011}, the values of the norm of $j_N(P_1)$ have been computed when $N=6,10$ using Borcherd's lifts. The reader can refer to \cite{GiampetroDarmon} \S 6 for an insightful comparison between the two methods. More computations for a particular choice of $j_N$  can be found in \cite{Voight2009}. 
\par
We now rewrite the quantity appearing in Theorem \ref{padicgz} in a more convenient way for our setting.
The choice of $P'$ is crucial for the computation in \cite{GiampetroDarmon} and the proof in \cite{Daas} as it allows to relate this cross ratio to the value of some $p$-adic $\Theta$-functions using the Čerednik-Drinfeld isomorphism. This choice is no longer essential in our case, as we will not work $p$-adically. Instead, define the following cross-ratio:
\begin{equation}\label{def J_n}
J_N(P_1,w_q(P_1),P_2,w_p(P_2))\coloneqq
\frac{j_N(P_1)-j_N(P_2)}{j_N(w_q(P_1))-j_N(P_2)}\cdot
\frac{j_N(w_q(P_1))-j_N(w_p(P_2))}{j_N(P_1)-j_N(w_p(P_2))}.
\end{equation}
The class group $\textup{Pic}(K_i)$ acts on the set of embeddings of $\mathcal{O}_i$ (see Equation (\ref{action of Pic})). By extension, it also acts on their fixed point.
Let $G=\textup{Pic}(K_1)\times \textup{Pic}(K_2)$, we show in Proposition \ref{rewriting J_n} that
\begin{equation}\label{equation def J_N}
\mathcal{J}_N(D_1,D_2)
=\Biggl(\prod_{\substack{(\sigma_1,\sigma_2)\in G}}
\biggl|J_N\Bigl(P_1^{\sigma_1},w_q(P_1)^{\sigma_1},P_2^{\sigma_2},w_p(P_2)^{\sigma_2}\Bigl)\biggl|^2\Biggl)^{\frac{4}{w_1w_2}}.
\end{equation}
We use this new formula for $\mathcal{J}_N(D_1,D_2)$ to prove Theorem \ref{padicgz}.
Our proof will follow the same steps as the original analytical proof of Theorem \ref{GZ}. First we will take a $s$-family of Hilbert Eisenstein series attached to the real quadratic field $F$ that will vanish at $s=0$. The diagonal restriction of the derivative in $s=0$ (see Definition~\ref{def Eis}) is a non-holomorphic cusp form of weight $2$ for $\Gamma_0(N)$. We then apply the holomorphic projection operator to compute the Fourier coefficients of a modular form in $S_2(\Gamma_0(N))$ (Theorem \ref{thm mod}) which must be 0 when $N=6,10,22$. The first Fourier coefficient $a_1$ is given explicitly as a difference of two terms $a_1=b_1-c_1=0.$ It is straightforward to verify that $b_1$ is the logarithm of the right-hand side of the equality in Theorem \ref{padicgz}. In Section \ref{section Quaternion} we prove that $c_1$ can be expressed as a sum of evaluations of Green's functions on $B_{N}$ at CM points. Finally, since $\tau_1,\tau_2\rightarrow\log(\mid j_{N}(\tau_1)-j_{N}(\tau_2)\mid ^2)$ is also a Green's function on the Shimura curve, we are able to conclude. 
\\
\\
\textbf{Acknowledgments} I would like to thank Jan Vonk for introducing me this problem, and for his guidance and insight during this work. I would like to thank Pierre Charollois for his help and his detailed feedback. Finally, thanks goes to Mike Daas for his interest in my work and his kind answers to my questions, and to Pierre Morain for his support. This work is part of an on-going PhD work and was supported by ERC Starting Grant "GAGARIN" 101076941. 
\section{Rewriting the equality}
Following the steps of the original analytical proof of Theorem \ref{GZ} of Gross--Zagier \cite{GrossZagier84}, we construct an explicit  cusp form $f$ of weight $2$ for $\Gamma_0(N)$ whose Fourier coefficients are related to Equation (\ref{equation def J_N}). To do so we will apply the holomorphic projection to a non-holomorphic cusp form of weight $2$ obtained from a real analytic family of Hilbert Eisenstein series. This will allow us to compute the Fourier coefficients of $f$. 
 When  $N=6,10,22$, this cusp form must vanish (see Corollary \ref{cor}). The goal of Section \ref{section Quaternion} will be to prove that the vanishing of the first Fourier coefficient of $f$ is equivalent to Theorem \ref{padicgz}.
\subsection{Eisenstein Series}
In this subsection we construct a non-holomorphic cusp form to which we will apply the holomorphic projection.
Recall the diagram given before Theorem \ref{GZ}. We also fix $N=pq$ with $p$ and $q$ two different primes inert in $K_1$ and $K_2$. They both split in $F$.
\par
 Following \cite{DDP} Section 2.2, let $\textup{Pic}(F)$ be the class group of $F$ and $\lambda\in \textup{Pic}(F)$ be an ideal class. We take $t_\lambda\subset \mathcal{O}_F$ an ideal in this class. We define for $\text{Re}(s)>1/2$ and $z,z'\in\mathbf{H}$,
\[E_{s,\lambda}(z,z')\coloneqq \sum_{[\mathfrak{a}]\in \textup{Pic}(F)}\chi(\mathfrak{a})N(\mathfrak{a})^{1+2s} E_{s,\lambda}^\mathfrak{a}, \]
with 
\[ E_{s,\lambda}^\mathfrak{a}\coloneqq \frac{1}{C_U}\sum_{\substack{m\in \mathfrak{a} \\ n\in \mathfrak{a} t_\lambda^{-1} \delta_F^{-1} \\ (m,n) \mod U \\ (m,n)\neq (0,0)}} \frac{(yy')^{s}}{(mz+n)(m'z'+n') \mid (mz+n)(m'z'+n')\mid^{2s}},\]
where $\delta_F$ is the different of $F$, $U\coloneqq \{ u\in \mathcal{O}_F^\times, N(u)=1,\, u \equiv 1 \mod t_\lambda \}$ which acts by diagonal multiplication on the two factors $(m,n)$ and $C_U=[\mathcal{O}_F^\times:U]$. The Hilbert Eisenstein series $E_{s,\lambda}(z,z')$ is a non holomorphic Hilbert modular form of weight $1$ for 
\[\Gamma_\lambda\coloneqq \biggl\{\begin{pmatrix} a & c\\ b& d\end{pmatrix}, \, a,d\in \mathcal{O}_F,\,\, b\in t_\lambda^{-1}\delta_F ^{-1},\,\, c\in t_\lambda \delta_F, \,\, ad-bc\in \mathcal{O}_F^\times \biggl\}.\]
The analytical proof in \cite{GrossZagier84} relies on the particular case $t_\lambda=\delta_F^{-1}$.
Each of these Eisenstein series has a holomorphic continuation in $s$ and when $\chi(t_\lambda)=-1$, we have $E_{0,\lambda}(z,z')=0$. It is then tempting to take the first derivative at $s=0$ and apply the holomorphic projection. However, the quotient in Theorem \ref{padicgz} leads us to take a difference of these Eisenstein series instead of taking a single one. We define now the non holomorphic modular form of weight 2 which we will apply the holomorphic projection to. 
\begin{definition}\label{def Eis}
    Let $p=\mathbf{p}_1\mathbf{p}_2$ and $q=\mathbf{q}_1\mathbf{q}_2$ the decomposition of $p$ and $q$ in $F$. We define 
\begin{equation}
 E_N(z)\coloneqq  \frac{D^{1/2}}{8\pi^2N}\sum_{i,j=1,2}(-1)^{i+j}\frac{\partial}{\partial s}E_{s,\delta_F^{-1}\mathbf{p}_i\mathbf{q}_j}(z,z)\biggl|_{s=0}
\end{equation}
\end{definition}
     Note that we took $t_\lambda=\delta_F^{-1}\mathbf{p}_i\mathbf{q}_j$ such that $\chi(t_\lambda)=-1.$
    Our definition of $E_N$ depends on the choice of $\mathbf{p}_1$ and $\mathbf{q}_1$, but the reader should keep in mind that we are trying to prove that the holomorphic projection of $E_N(z)$ is $0$, hence $E_N(z)$ needs only to be well defined up to sign.
\subsection{Holomorphic projection}
It is easily checked that each $E_{s,\delta_F^{-1}\mathbf{p}_i\mathbf{q}_j}$ is a particular case of the Eisenstein series appearing in \cite{GrossKohnenZagier87} \S III,1. Applying the holomorphic projection lemma of \cite{GrossKohnenZagier87} to our case, we can state the following proposition. First, define for an ideal $I$ of $\mathcal{O}_F$
\begin{equation}
    \sigma_{0,\chi}(I)=\sum_{\substack{J\mid I\\ J\subset \mathcal{O}_F}}\chi(J)\quad\textup{and } \quad \sigma'_{\chi}(I)=\sum_{\substack{J\mid I\\ J\subset \mathcal{O}_F}}\chi(J)\log(\textup{Nm}^F_{\QQ}(J)).
\end{equation}
\begin{proposition}[Holomorphic projection]\label{thm mod}
Let $\langle\,\cdot \, , \cdot \, \rangle_{Pet}$ be the Petersson inner product of weight $2$ for $ \Gamma_0(N)$. There  exists a unique modular form $f=\sum_{m>0}a_mq^m\in S_2(\Gamma_0(N))$ called the holomorphic projection of $E_N$ on $S_2(\Gamma_0(N))$ such that for all $g\in S_2(\Gamma_0(N))$, 
 \[\langle f,g\rangle_{Pet}=\langle E_N,g\rangle_{Pet}.\]
 Furthermore, when $m$ is prime to $N$, the Fourier coefficient $a_m$ is given by
 \[
        a_m=b_m-c_m,
    \]
 where
    \begin{equation}
b_m=\sum_{i,j=1,2}(-1)^{i+j}\sum_{\substack{\nu\in \delta_F^{-1}\mathbf{p}_i\mathbf{q}_j\\ \nu\gg0\\ \textup{Tr}_{F/\mathbb{Q}}(\nu)=m}}\sigma'_{\chi}((\delta_F^{-1}\mathbf{p}_i\mathbf{q}_j)^{-1}(\nu)),
\end{equation}
while
\begin{equation}
c_m=2\lim_{s\rightarrow 1} \sum_{i,j=1,2}(-1)^{i+j}\sum_{\substack{\nu\in \delta_F^{-1}\mathbf{p}_i\mathbf{q}_j\\ \nu'>0>\nu\\ \textup{Tr}_{F/\mathbb{Q}}(\nu)=m}}\sigma_{0,\chi}((\delta_F^{-1}\mathbf{p}_i\mathbf{q}_j)^{-1}(\nu))Q_{s-1}\Bigl(1-\frac{2\nu}{m}\Bigl),
\end{equation}
with $Q_{s-1}(t)$ the Legendre function of the second kind (See Equation (\ref{def Legendre function})). 

\end{proposition}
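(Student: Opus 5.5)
Existence and uniqueness of $f$ follow from Riesz representation on the finite-dimensional space $S_2(\Gamma_0(N))$ with the Petersson product, once we know that $g\mapsto\langle E_N,g\rangle_{Pet}$ is well defined. So the first step is to check that $E_N$ is a non-holomorphic modular form of weight $2$ for $\Gamma_0(N)$ with enough decay at every cusp that the pairing with cusp forms converges. Restricting a Hilbert modular form of parallel weight $1$ for $\Gamma_\lambda$ to the diagonal $z=z'$ gives weight $2$ for $\Gamma_\lambda\cap \mathrm{SL}_2(\mathbb{Q})$. For $t_\lambda=\delta_F^{-1}\mathbf{p}_i\mathbf{q}_j$ we have $t_\lambda\delta_F\cap\mathbb{Z}=N\mathbb{Z}$, since $\mathbf{p}_i\cap\mathbb{Z}=p\mathbb{Z}$. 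Similarly $t_\lambda^{-1}\delta_F^{-1}\cap\mathbb{Q}\supseteq\mathbb{Z}$. Hence this intersection contains $\Gamma_0(N)$. The derivative in $s$ at $s=0$ preserves modularity because $E_{0,\lambda}=0$, which holds since $\chi(t_\lambda)=-1$. Growth at the cusps is controlled by the constant terms of $E_{s,\lambda}$ computed in \cite{GrossKohnenZagier87}. The alternating sum over $(i,j)$ is designed so that the constant-term contributions (terms in $y^{0}$ and $y^{-1}\log y$ and the like) cancel, or at least have no effect on the pairing with cusp forms.

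Next I compute the Fourier expansion of the diagonal restriction of $\partial_s E_{s,\lambda}$ at $s=0$, following \cite{GrossKohnenZagier87} \S III.1. Apply Poisson summation over $n$ in the standard way. Each coefficient of the Hilbert Eisenstein series is indexed by $\nu\in t_\lambda$ (the dual of $t_\lambda^{-1}\delta_F^{-1}$). Its arithmetic factor is $\sigma_{s,\chi}(t_\lambda^{-1}(\nu))$ and its archimedean factor is a product of confluent hypergeometric functions in $\nu y$ and $\nu' y$. Setting $z=z'$ groups the terms by $m=\mathrm{Tr}(\nu)$. At $s=0$ the archimedean factor vanishes unless $\nu\gg0$, and the arithmetic factor vanishes because $\chi(t_\lambda)=-1$. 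Differentiating in $s$ therefore gives two kinds of terms. The totally positive $\nu$ contribute $\sigma'_\chi(t_\lambda^{-1}(\nu))e^{2i\pi m z}$. The $\nu$ of mixed sign, with $\nu'>0>\nu$ or the symmetric case, contribute $\sigma_{0,\chi}(t_\lambda^{-1}(\nu))$ times a non-holomorphic function of $y$ times $e^{2i\pi mz}$. The normalisation $\frac{D^{1/2}}{8\pi^2N}$ is chosen so that the positive part has exactly the coefficient appearing in $b_m$.

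The third step is to apply the holomorphic projection lemma (\cite{GrossKohnenZagier87}, Prop.~5.1 of Ch.~IV). If $\Phi(z)=\sum_m\phi_m(y)e^{2i\pi mx}$ has weight $2$ and moderate growth, its projection has coefficients
\[
a_m=4\pi m\int_0^\infty \phi_m(y)e^{-2\pi my}\,dy .
\]
When the growth is borderline, the integral is regularised by inserting $y^{s}$ (equivalently, by a limit as $s\to1$). For the holomorphic part this returns the coefficient itself, which gives $b_m$. For the mixed-sign terms, the $y$-integral of the archimedean function can be evaluated explicitly: it is $-2Q_{s-1}(1-2\nu/m)$, the Legendre function of the second kind. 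This uses the integral representation of $Q_{s-1}$, and I would simply quote the identity from \cite{GrossKohnenZagier87}. The two symmetric mixed cases ($\nu<0<\nu'$ and $\nu'<0<\nu$) are exchanged by Galois conjugation. The conjugation sends $\mathbf{p}_i\mathbf{q}_j$ to $\mathbf{p}_{3-i}\mathbf{q}_{3-j}$ and leaves the sign $(-1)^{i+j}$ unchanged. Hence they combine into the single term with $\nu'>0>\nu$ and a factor $2$, giving $c_m$.

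The main obstacle is justifying the holomorphic projection, which needs $E_N$ to have sufficient decay at all cusps of $\Gamma_0(N)$, not just at $\infty$. That is why $m$ is restricted to be prime to $N$ and why the differences in $(i,j)$ appear. My first approach would be to compute the constant terms at each cusp $1/d$ with $d\mid N$, using Atkin–Lehner operators to move them to $\infty$. The Atkin–Lehner operators permute the ideals $\mathbf{p}_i\mathbf{q}_j$ up to the class of $\delta_F^{-1}$, and I would check that the alternating sum kills the non-decaying terms. If exact cancellation fails, I would fall back on the regularised version of the lemma in \cite{GrossKohnenZagier87}, which allows constant terms of the form $a+b\log y$ at the cusps, at the cost of affecting only the coefficients $a_m$ with $m$ not prime to $N$.
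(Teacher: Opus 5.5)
Your outline is essentially the paper's argument. The paper applies \cite{GrossKohnenZagier87} \S III.2, Prop.~2 to each $E_{s,\delta_F^{-1}\mathbf{p}_i\mathbf{q}_j}$ separately, notes that the regularisation terms $\lambda_{D_1,D_2,N,m}/(s-1)+C_{D_1,D_2,N,m}$ (which come from exactly the non-decaying constant terms you isolate) are independent of $(i,j)$ and so cancel in the signed sum, and then uses $\nu\mapsto\nu'$, $\mathbf{p}_i\mathbf{q}_j\mapsto\mathbf{p}_{3-i}\mathbf{q}_{3-j}$ to produce the factor $2$.

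Two small corrections:
\begin{itemize}
\item As written, your constants give $2c_m$ rather than $c_m$. To get $a_m=b_m-c_m$, each $\nu$ of negative norm must contribute $-\sigma_{0,\chi}((\delta_F^{-1}\mathbf{p}_i\mathbf{q}_j)^{-1}(\nu))\,Q_{s-1}(|1-2\nu/m|)$, not $-2Q_{s-1}$, because you then add a second factor $2$ when pairing conjugates.
\item Your fallback is unsound: uncancelled cusp contributions would enter $a_m$ for every $m$, including $(m,N)=1$. It is not needed, though. Since $p,q$ are inert in $K_1,K_2$, we have $\chi(\mathbf{p}_i)=\chi(\mathbf{q}_j)=-1$ and $\textup{Nm}(\mathbf{p}_i\mathbf{q}_j)=N$ for all $i,j$. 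So the constant terms are genuinely $(i,j)$-independent and cancel.
\end{itemize}
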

\begin{proof}
   For each $i,j=1,2$ define
\[f_{i,j} \coloneqq\pi_{hol}\Bigl(\frac{D^{1/2}}{8\pi^2N}\frac{\partial}{\partial s}E_{s,\delta_F^{-1}\mathbf{p}_i\mathbf{q}_j}(z,z)\bigl|_{s=0}\Bigl).
   \] 
   When $m$ is prime to $N$, \S III.2 Proposition 2 in \cite{GrossKohnenZagier87} gives a formula for the Fourier coefficient $a_{m,i,j}$ of $f_{i,j}$.
Take $r_{i,j}\in \mathbb{Z}$ such that 
\[ \mathbf{p}_i\mathbf{q}_j=N\mathbb{Z}+\frac{r_{i,j}+\sqrt{D}}{2}\mathbb{Z}.\]
Using the bijection
\begin{equation}
\begin{split}
    mr_{i,j}+2N\ZZ &\xrightarrow{\sim}\{\nu\in\delta_F^{-1}\mathbf{p}_i\mathbf{q}_j,\,\, \textup{Tr}_{F/\mathbb{Q}}(\nu)=m\}
    \\
    n&\rightarrow\frac{n+m\sqrt{D}}{2\sqrt{D}}
    \end{split}
\end{equation}
 we can rewrite $a_{m,i,j}$ as
    \begin{equation}
    \begin{split}
        a_{m,i,j}=&\sum_{\substack{\nu\in \delta_F^{-1}\mathbf{p}_i\mathbf{q}_j\\ \nu\gg0\\ \textup{Tr}_{F/\mathbb{Q}}(\nu)=m}}\sigma'_{\chi}((\delta_F^{-1}\mathbf{p}_i\mathbf{q}_j)^{-1}(\nu))
        \\&-\lim_{s\rightarrow 1} \sum_{\substack{\nu\in \delta_F^{-1}\mathbf{p}_i\mathbf{q}_j\\ \textup{Nm}_{F/\mathbb{Q}}(\nu)<0\\ \textup{Tr}_{F/\mathbb{Q}}(\nu)=m}}\sigma_{0,\chi}((\delta_F^{-1}\mathbf{p}_i\mathbf{q}_j)^{-1}(\nu))Q_{s-1}\Bigl(\Bigl|1-\frac{2\nu}{m}\Bigl|\Bigl) +\frac{\lambda_{D_1,D_2,N,m}}{s-1}
        \\&+C_{D_1,D_2,N,m}.
    \end{split}
    \end{equation}
 Where $\lambda_{D_1,D_2,N,m}\in\mathbb{Q}$ and $C_{D_1,D_2,N,m}\in\mathbb{C}$ are independent of the choice of $\mathbf{p}_i$ and $\mathbf{q}_j$
We end the proof by changing $\nu$ to $\nu'$ when $\nu>0>\nu'$ and taking our signed sum on $i,j=1,2$.
\end{proof}
The simplification of the constant $C_{D_1,D_2,N,m}$ in the proof reflects the fact that $E_{N}$ is a cuspidal (non holomorphic) modular form, which makes the computation of the holomorphic projection much easier than the more general case of \cite{GrossKohnenZagier87}.

\begin{remark}
    The analytic proof in \cite{Daas} follows a similar strategy, working at a prime $p \mid N$. A suitable $p$-adic family $E_{s}^{(p)}$ of Hilbert cusp forms on $F$ is computed using Galois deformation theory, and the ordinary projection is applied to the diagonal restriction of its first derivative, to obtain:
    \begin{equation}\label{p adic Eis Daas}
    e^{\textup{ord}}\biggl(\frac{\partial}{\partial s}E_{s}^{(p)}(z,z)\biggl) \ \in \ S_2(\Gamma_0(N)). 
      \end{equation}
Notice that both constructions are based on the same Hilbert modular form given by Proposition 2.1 of \cite{DDP}.
\end{remark}
\begin{corollary} \label{cor}
    When $N\in\{6,10,22\}$, the form $f$ from Proposition \ref{thm mod} is 0 and in particular:
    \begin{equation}\label{b1=c1}
        b_1=c_1,
    \end{equation}
    with 
    \begin{equation}
    b_1=\sum_{i,j=1,2}(-1)^{i+j}\sum_{\substack{\nu\in \delta_F^{-1}\mathbf{p}_i\mathbf{q}_j\\ \nu\gg0\\ \textup{Tr}_{F/\mathbb{Q}}(\nu)=1}}\sigma'_{\chi}((\delta_F^{-1}\mathbf{p}_i\mathbf{q}_j)^{-1}(\nu)),
     \end{equation}
    and 
    \begin{equation}
    c_1=2\lim_{s\rightarrow1} \sum_{i,j=1,2}(-1)^{i+j}\sum_{\substack{\nu\in \delta_F^{-1}\mathbf{p}_i\mathbf{q}_j\\ \nu'>0>\nu\\ \textup{Tr}_{F/\mathbb{Q}}(\nu)=1}}\sigma_{0,\chi}((\delta_F^{-1}\mathbf{p}_i\mathbf{q}_j)^{-1}(\nu))Q_{s-1}(1-2\nu).
     \end{equation}
\end{corollary}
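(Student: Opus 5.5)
The plan is to show that $S_2(\Gamma_0(N))=0$ for $N\in\{6,10,22\}$, so that the form $f$ produced by Proposition \ref{thm mod} must vanish. Once $f=0$, all its Fourier coefficients are zero. Since $m=1$ is prime to $N$, the formula $a_1=b_1-c_1$ from Proposition \ref{thm mod} applies, and $a_1=0$ gives (\ref{b1=c1}). The displayed expressions for $b_1$ and $c_1$ are just the specialisations $m=1$ of the formulas for $b_m$ and $c_m$. The only simplification is in $c_1$: for $m=1$ one has $Q_{s-1}(1-2\nu/m)=Q_{s-1}(1-2\nu)$, and on the range $\nu'>0>\nu$ the argument $1-2\nu$ is already $>1$, so no absolute value is needed.

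For the vanishing of $S_2(\Gamma_0(N))$, the dimension of $S_2(\Gamma_0(N))$ equals the genus of the modular curve $X_0(N)$. I would compute this genus with the standard formula
\[
g=1+\frac{\mu}{12}-\frac{\nu_2}{4}-\frac{\nu_3}{3}-\frac{\nu_\infty}{2},
\]
where $\mu=N\prod_{\ell\mid N}(1+1/\ell)$ and $\nu_\infty=\sum_{d\mid N}\varphi(\gcd(d,N/d))$. The elliptic point counts are $\nu_2=\prod_{\ell\mid N}\bigl(1+\bigl(\frac{-4}{\ell}\bigr)\bigr)$ and $\nu_3=\prod_{\ell\mid N}\bigl(1+\bigl(\frac{-3}{\ell}\bigr)\bigr)$, each set to $0$ if $4\mid N$ or $9\mid N$ respectively.

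The three cases are as follows.
\begin{itemize}
\item $N=6$: $\mu=12$, $\nu_2=\nu_3=0$ (since $2$ and $3$ ramify in the relevant fields), $\nu_\infty=4$, so $g=1+1-2=0$.
\item $N=10$: $\mu=18$, $\nu_2=(1+0)(1+1)=2$, $\nu_3=(1+(-1))\cdot(\dots)=0$, $\nu_\infty=4$, so $g=1+\tfrac32-\tfrac12-2=0$.
\item $N=22$: $\mu=36$, $\nu_2=(1+0)(1+(-1))=0$ because $\bigl(\frac{-4}{11}\bigr)=-1$, $\nu_3=(1+(-1))(1+(-1))=0$ because $\bigl(\frac{-3}{2}\bigr)=-1$, $\nu_\infty=4$, so $g=1+3-2=2$.
\end{itemize}
The case $N=22$ is therefore the main obstacle: $X_0(22)$ has genus $2$, so $S_2(\Gamma_0(22))\neq 0$, and "$f=0$" cannot come from $S_2(\Gamma_0(N))$ being zero.

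The correct space to consider is the $N$-new subspace. By Jacquet--Langlands, $S_2(\Gamma_0(N))^{N\text{-new}}$ corresponds to weight-$2$ forms on the Shimura curve $X_N$, and $X_N$ has genus $0$ for $N\in\{6,10,22\}$, so this new subspace vanishes. For $N=6$ and $N=10$ the whole space $S_2(\Gamma_0(N))$ is already zero by the computation above. For $N=22$, $S_2(\Gamma_0(22))$ is spanned by the oldforms coming from the elliptic curve $11a$ (the two images of its newform under $z\mapsto z$ and $z\mapsto 2z$).

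So for $N=22$ I would argue that $f$ lies in the new subspace, hence vanishes. The key input is that the Atkin--Lehner involutions $w_p,w_q$ act on $E_N$ (and hence on $f$, since holomorphic projection commutes with them) by the sign forced by the alternating sum $\sum(-1)^{i+j}$. Replacing $\mathbf p_1$ by $\mathbf p_2$ is the effect of $w_p$ on the Hilbert Eisenstein series, and it flips the sign of $E_N$. This antisymmetry at each of $p$ and $q$ should kill the projection onto the oldforms from level $N/p$ and $N/q$; this is the Hecke-equivariance/eigenvalue check I expect to need the most care. Once it is done, $f\in S_2(\Gamma_0(N))^{\mathrm{new}}=0$, and hence $b_1=c_1$.
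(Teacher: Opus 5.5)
\textbf{Where the proposal breaks.} For $N=6,10$ your genus computation does what the paper does: it simply notes $S_2(\Gamma_0(N))=0$. The specialisation $a_1=b_1-c_1$ is also fine. The gap is the $N=22$ step, because Atkin--Lehner signs cannot certify that a form is new: oldforms realise both signs. Write $S_2(\Gamma_0(22))=\langle g(z),g(2z)\rangle$ with $g$ the newform of level $11$. Since $a_{11}(g)=1$, we have $g|W_{11}=-g$. The level-$22$ operator $W_{11}$ commutes with $g\mapsto g(2z)$, so $W_{11}$ acts on the whole space by the scalar $-1$. Meanwhile $W_2$ (weight-$2$ slash) sends $g(z)\mapsto 2g(2z)\mapsto g(z)$, so it has both eigenvalues $\pm1$. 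Hence the joint eigenspace $W_2=W_{11}=-1$ is the line spanned by the oldform $g(z)-2g(2z)$. Even granting your anti-invariance of $E_N$ under $w_p$ and $w_q$, $f$ could be a nonzero multiple of this form. The step ``antisymmetry kills the oldforms'' is therefore false.

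\textbf{The sign is reversed, and invariance finishes the proof.} Apply $W_p=\begin{pmatrix} pa&b\\ Nc&pd\end{pmatrix}$ via the substitution $(m,n)\mapsto(m,n)W_p$. This maps $\mathfrak a\times\mathfrak a(\mathbf p_1\mathbf q_j)^{-1}$ bijectively onto $\mathfrak a\mathbf p_2\times\mathfrak a\mathbf p_2(\mathbf p_2\mathbf q_j)^{-1}$, and each summand picks up a factor $p^{1+2s}$. Absorbing this factor by re-indexing $\mathfrak a\mapsto\mathfrak a\mathbf p_2$ in the sum over $\textup{Pic}(F)$ costs $\chi(\mathbf p_2)=\left(\frac{D_1}{p}\right)=-1$, because $p$ is inert in $K_1$. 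Hence $E_{s,\delta_F^{-1}\mathbf p_1\mathbf q_j}(z,z)|W_p=-E_{s,\delta_F^{-1}\mathbf p_2\mathbf q_j}(z,z)$. Combined with the sign $(-1)^{i+j}$, this makes $E_N$, and therefore $f$, \emph{invariant} under $W_p$, and likewise under $W_q$. In a split situation one would have $\chi(\mathbf p_2)=+1$ and your anti-invariance would hold. Here it is reversed, consistent with the Jacquet--Langlands sign flip and with the $w_q$- and $w_p$-anti-invariant divisors on $X_N$ in the last section. This invariance closes the argument: $f|W_{11}=f$, while $W_{11}=-1$ on $S_2(\Gamma_0(22))$, so $f=0$.

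\textbf{Comparison with the paper.} The paper argues via $W_N$-invariance of $f$, quoted from GKZ, together with the claim that no nonzero form in $S_2(\Gamma_0(22))$ is $W_{22}$-invariant. As stated, that claim also fails. Indeed $W_{22}=W_2W_{11}$ fixes $g(z)-2g(2z)$, and $X_0(22)/w_{22}$ has genus $1$. So the ingredient actually needed, for you and for the paper, is invariance under the single involution $W_{11}$.
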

\begin{proof}
    If $N=6,10$, then $S_2(\Gamma_0(N))=\{0\}$ so $f=0$. When $N=22$, the space is non-zero. However, let 
    \[W_{N}=\begin{pmatrix} 
    0 & -1 \\ N &0 
    \end{pmatrix}\]  
    be the matrix of the Atkin-Lehner involution. From \cite{GrossKohnenZagier87} Section $3.1$, we know that $f$ is invariant by $W_N$. The only form in $S_2(\Gamma_0(22))$ satisfying this condition is 0, hence $f=0$.
\end{proof}
\begin{remark}
    We will show that the equality $b_1=c_1$ is equivalent to our main theorem. In \cite{Daas}, the ordinary projection of the Eisenstein series (\ref{p adic Eis Daas}) will also be $0$ for $N=6,10,22$ and the vanishing of the first Fourier coefficient will be one of the main ingredients of the proof.
\end{remark}
Our goal now will be to show that this equality is equivalent to Theorem \ref{padicgz}. First, let's take a look at the first term $b_1$. Let $\pm a,\pm b$ the four square roots of $D \mod 2N$ when $N=6,10,22$. We choose $a$ such that $\mathbf{p}_1\mathbf{q}_1=N\mathbb{Z}+\frac{a+\sqrt{D}}{2}\mathbb{Z}$, we have the following proposition.
\begin{proposition}\label{prop b1} With the same notation as before:
    \[b_1=-\log\Biggl(\prod_{\substack{x^2<D\\x^2\equiv D \bmod{4N}}}\mathbf{F}\biggl(\frac{D-x^2}{4N}\biggl)^{\delta(x)}\Biggl).\]
\end{proposition}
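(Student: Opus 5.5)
Our plan is to parametrize the $\nu$ in each inner sum of $b_1$ explicitly and reduce it to the Gross--Zagier local computation of $\sigma'_\chi$. By the bijection in the proof of Proposition~\ref{thm mod} with $m=1$, the elements $\nu\in\delta_F^{-1}\mathbf{p}_i\mathbf{q}_j$ with trace $1$ are
\[\nu=\frac{x+\sqrt{D}}{2\sqrt{D}},\qquad x\equiv r_{i,j}\ [2N].\]
The condition $\nu\gg0$ is equivalent to $x^2<D$. Moreover,
\[\textup{Nm}_{F/\mathbb{Q}}(\nu)=\frac{D-x^2}{4D}.\]
Since $\mathbf{p}_1\mathbf{q}_1$ corresponds to $r_{1,1}=a$, conjugation (or changing the sign of $\sqrt D$) shows that $\mathbf{p}_2\mathbf{q}_2$ corresponds to $-a$. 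Likewise $\mathbf{p}_1\mathbf{q}_2$ and $\mathbf{p}_2\mathbf{q}_1$ correspond to $\pm b$, with a definite sign that I will not need to pin down. The sign $(-1)^{i+j}$ is then exactly $\delta(x)$. The congruence $x^2\equiv D\ [4N]$ follows automatically from $x\equiv\pm a,\pm b\ [2N]$ together with $x\equiv D\ [2]$. So $b_1=\sum_{x}\delta(x)\,\sigma'_\chi(I_x)$, where the sum runs over $x^2<D$ with $x^2\equiv D\ [4N]$, and
\[I_x=(\delta_F^{-1}\mathbf{p}_i\mathbf{q}_j)^{-1}(\nu)\]
is an integral ideal of norm $\frac{D-x^2}{4D}\cdot\frac{D}{N}=\frac{D-x^2}{4N}$.

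It then remains to show that $\sigma'_\chi(I)=-\log\mathbf{F}(\textup{Nm}\, I)$ for these ideals; this is the key local computation. I would follow the argument in \cite{GrossZagier84}, Proposition~3.7/the $b_m$ computation. The function $\sigma_{0,\chi}$ is multiplicative on ideals, and $\sigma'_\chi$ is its "logarithmic derivative" $\frac{d}{ds}\sum_{J\mid I}\chi(J)\textup{Nm}(J)^{s}$ at $s=0$. Hence $\sigma'_\chi(I)=0$ unless $\sigma_{0,\chi}$ vanishes on exactly one prime-power factor. In that case, $\sigma'_\chi(I)$ equals $\sigma_{0,\chi}$ of the rest times the derivative at that prime.

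We then work prime by prime over $F$. A rational prime $\ell$ either splits in $F$, in which case $\chi(\mathfrak l)=\chi(\mathfrak l')=\epsilon(\ell)$; or it is inert, in which case $\chi=1$; or it is ramified, in which case $\chi(\mathfrak l)=\epsilon(\ell)$. Using that $I\bar I$ is related to $\textup{Nm}(I)$ and that the primes with $\epsilon(\ell)=-1$ which are inert in $F$ must appear to even power (since $\textup{Nm}(\nu)$ is a norm), one identifies $\sigma'_\chi(I)$ with $\pm\frac{1}{2}(v_\ell(m)+1)\log\ell$ for the unique bad prime $\ell$. One checks directly from the definition that $\log\mathbf{F}(m)$ equals the same quantity, with the sign giving the minus.

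The main obstacle is this last identification, especially bookkeeping for primes dividing $N$ or $D$. Here $p,q$ divide $\textup{Nm}(I)$ possibly because of the factor $\mathbf{p}_i\mathbf{q}_j$ that we divided out, and one must check that $I$ involves only the "right" prime above $p$ and $q$. I would handle this by comparing with the case $N=1$ of Gross--Zagier, where the identity $\sigma'_\chi((\nu)\delta_F)=-\log\mathbf{F}(\textup{Nm})$ is already established. Since $\epsilon(p)=\epsilon(q)=-1$ and $p,q$ split in $F$, dividing by $\mathbf{p}_i\mathbf{q}_j$ multiplies $\sigma_{0,\chi}$ locally by a computable factor. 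I would verify that this factor exactly matches the change from $\mathbf{F}((D-x^2)/4)$-type expressions to $\mathbf{F}((D-x^2)/4N)$, and then exponentiate the resulting sum to obtain the stated product.
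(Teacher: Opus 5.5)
Your reduction is correct, and it is more explicit than the paper, which simply invokes \cite{Daas} Proposition 1.6.1. The parametrization $\nu=\frac{x+\sqrt D}{2\sqrt D}$, the norm $\textup{Nm}(I_x)=\frac{D-x^2}{4N}$ and the matching $(-1)^{i+j}=\delta(x)$ all check out, so $b_1=\sum_x\delta(x)\,\sigma'_\chi(I_x)$.

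The gap is in the identity $\sigma'_\chi(I_x)=-\log\mathbf{F}(\textup{Nm}\,I_x)$. This is false for general integral ideals, and the two inputs that make it true never appear in your argument.

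\textbf{First input: $\chi(I_x)=-1$.} This holds because $\chi(\delta_F^{-1}\mathbf{p}_i\mathbf{q}_j)=-1$ and $\nu\gg0$. Without it, your claim that $\sigma'_\chi(I)=0$ unless exactly one local factor of $\sigma_{0,\chi}$ vanishes is wrong: take $I=\mathfrak{l}$ with $\chi(\mathfrak{l})=1$.

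\textbf{Second input, the missing idea: primitivity.} Write $\frac{x+\sqrt D}{2}=\frac{x-D}{2}+\frac{D+\sqrt D}{2}$. This element is not divisible in $\mathcal{O}_F$ by any rational integer $>1$, hence neither is $I_x$. Consequently:
\begin{itemize}
\item no inert prime divides $\textup{Nm}(I_x)$ at all (your sentence about inert primes with $\epsilon(\ell)=-1$ does not make sense, since $\epsilon$ is undefined there);
\item ramified primes occur to exponent at most $1$;
\item above each split $\ell$ only one of $\mathfrak{l},\mathfrak{l}'$ occurs.
\end{itemize}
So $J\mapsto\textup{Nm}(J)$ is a bijection from divisors of $I_x$ onto divisors of $m=\textup{Nm}(I_x)$, with $\chi(J)=\epsilon(\textup{Nm}\,J)$. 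This gives
\[\sigma'_\chi(I_x)=\sum_{n'\mid m}\epsilon(n')\log n'=\log m\sum_{n'\mid m}\epsilon(n')-\log\mathbf{F}(m)=-\log\mathbf{F}(m),\]
where the middle sum vanishes because $\chi(I_x)=-1$. Without primitivity, $\sigma_{0,\chi}(I)$ is not even a function of $\textup{Nm}(I)$: with $\chi(\mathfrak{l})=1$, the ideals $\mathfrak{l}\mathfrak{l}'$ and $\mathfrak{l}^2$ have the same norm but give $4$ and $3$. Your closed form $\pm\frac12(v_\ell(m)+1)\log\ell$ also drops the factor $\prod_{\ell'\neq\ell}\sigma_{0,\epsilon}(\ell'^{v_{\ell'}(m)})$.

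\textbf{Your plan for $p$ and $q$ would fail.} You propose to deduce the result from the $N=1$ identity for the ideal generated by $\frac{x+\sqrt D}{2}$, via a local correction factor. Removing $\mathbf{p}_i\mathbf{q}_j$ flips the parity of the exponents at two primes with $\chi=-1$. Since $\sigma_{0,\chi}$ vanishes at odd exponents, no multiplicative correction can exist.

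Concretely, take $N=6$, $D_1=-19$, $D_2=-43$, $x=19$. Then $\frac{D-x^2}{4}=114=2\cdot3\cdot19$ with $\epsilon(2)=\epsilon(3)=\epsilon(19)=-1$. Hence $\sigma'_\chi$ of the ideal generated by $\frac{x+\sqrt D}{2}$ is $-\log\mathbf{F}(114)=0$. On the other hand, $I_x$ is the prime above $19$ and $\sigma'_\chi(I_x)=-\log 19$.

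No comparison with $N=1$ is needed. $I_x$ is itself primitive with $\chi(I_x)=-1$, and the primes above $p$ and $q$ behave like any other split prime with $\epsilon=-1$. It does not matter which of $\mathbf{p}_1,\mathbf{p}_2$ occurs, since $\chi$ equals $-1$ on both.
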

\begin{proof}
This is a consequence of \cite{Daas} Proposition 1.6.1 and our choice of $\delta$.
\end{proof}
Now, looking at \[c_1=2\lim_{s\rightarrow 1} \sum_{i,j=1,2}(-1)^{i+j}\sum_{\substack{\nu\in \delta_F^{-1}\mathbf{p}_i\mathbf{q}_j\\ \nu'>0>\nu\\ \textup{Tr}_{F/\mathbb{Q}}(\nu)=1}}\sigma_{0,\chi}((\delta_F^{-1}\mathbf{p}_i\mathbf{q}_j)^{-1}(\nu))Q_{s-1}(1-2\nu),\]
 our goal for the next part is, in the way of \cite{GrossZagier84}, to relate our cross-ratio of $j_N$ invariant to a certain Green's function and show that its value at CM points is exactly the quantity that appears here. To do so, we introduce some linear structures on the quaternion algebra $B_N$. 
\begin{remark} When $\textup{Tr}_{F/\mathbb{Q}}(\nu)=1$ and $\nu>0>\nu'$, we have
    \[Q_0(1-2\nu')=\log\Bigl(1-\frac{1}{\nu'}\Bigl)=\log\Bigl(-\frac{\nu}{\nu'}\Bigl).\] Now, without any concern of convergence, this last expression for $c_1$ could be seen as an archimedean equivalent of the logarithm of the $p$-adic $\Theta$-function appearing in \cite{Daas} Proposition 6.1.4. 
   By switching the limit and the sum, we have the following heuristic for $c_1$ as a logarithm of an infinite product
    \begin{equation}\label{c1 infinit prod}
        \text{``}c_1=2\sum_{i,j=1,2}(-1)^{i+j}\log\biggl(\prod_{\substack{\nu\in \delta_F^{-1}\mathbf{p}_i\mathbf{q}_j\\ \nu'>0>\nu\\ \textup{Tr}_{F/\mathbb{Q}}(\nu)=1}}\Bigl(\frac{-\nu}{\nu'}\Bigl)^{\sigma_{0,\chi}((\delta_F^{-1}\mathbf{p}_i\mathbf{q}_j)^{-1}(\nu))}\biggl)\text{''}.
    \end{equation}
   Daas proves that the $p$-adic $\Theta$-function can be evaluated as
    \begin{equation}\label{eq theta fun}
        \Theta(D_1,D_2)^{\frac{\pm2}{w_1w_2}}=\lim_{n\rightarrow\infty}\prod_{\substack{\nu\in \delta_F^{-1}\mathbf{q}_1\\\nu\gg0\\ \textup{Tr}_{F/\mathbb{Q}}(\nu)=p^{2n}}}\Bigl(\frac{\nu}{\nu'}\Bigl)^{\sigma_{0,\chi}((\delta_F^{-1}\mathbf{q}_1)^{-1}(\nu))}.
    \end{equation}
Ignoring issues of convergence, we see a clear analogy between the infinite products (\ref{c1 infinit prod}) and (\ref{eq theta fun}) appearing in the archimedean and $p$-adic proof.
\end{remark}
\section{Quaternion algebra}\label{section Quaternion}
Let $B_{N}$ be the indefinite quaternion algebra ramified at $p$ and $q$. Let $R$ be a maximal order in $B_{N}$. Because $B_{N}$ is indefinite, $R$ is unique up to conjugation. The primes $p$ and $q$ are inert in $K_1$ and $K_2$, hence we have two embeddings $\alpha_1 : \mathcal{O}_{K_1}\xhookrightarrow{}R$, $\alpha_2:\mathcal{O}_{K_2}\xhookrightarrow{}R$. We then extend $\alpha_i:K_i\xhookrightarrow{}B_{N}$. Until the end, we choose one isomorphism $B_{N}\otimes\mathbb{R}\simeq \text{M}_2(\mathbb{R})$.
\subsection{Green's function}\label{subsection green function}
In this subsection, we introduce Green’s functions on the Shimura curve $X_N$. In particular, we show in Proposition \ref{Greenf} that when $N=6,10,22$, the function
\[(\tau_1,\tau_2)\rightarrow\log(|j_N(\tau_1)-j_N(\tau_2)|^2)
\]
can be interpreted as a Green's function (by abuse of notation, we define $~j_N(\tau)\coloneqq j_N(P)$ where $P\in X_N(\CC)$ is the image of $\tau\in \mathbf{H}$ by the projection $\mathbf{H}\rightarrow X_N(\CC)$). Later on, this will give us a way to compute the cross-ratio appearing in Theorem \ref{padicgz} explicitly.
\par
We will follow \cite{GrossGreenfunction} \S9. Let $\tau_1,\tau_2\in\mathbf{H}$ not equivalent under $R^1$, we define for $\textup{Re}(s)>1$,
\begin{equation}
g_s(\tau_1,\tau_2)\coloneqq -2Q_{s-1}(\text{cosh}(d(\tau_1,\tau_2)))=-2Q_{s-1}\Biggl(1+\frac{\mid \tau_1-{\tau_2}\mid^2}{2\text{\normalfont{Im}}(\tau_1)\text{\normalfont{Im}}(\tau_2)}\Biggl)
\end{equation}
and 
\begin{equation}\label{eq green function}
G_{N,s}(\tau_1,\tau_2)\coloneqq \sum_{\gamma\in R^1/\{\pm1\}}g_s(\tau_1,\gamma\tau_2),
\end{equation} 
where $R^1$ is the set of elements of $R$ of norm $1$. Recall that \begin{equation}\label{def Legendre function}
Q_{s-1}(t)=\int_{0}^\infty(t+\sqrt{t^2-1}\cosh(v))^{-s}dv
\end{equation}
is the Legendre function of the second kind. The series $G_{N,s}$ is convergent when Re$(s)>1$ and defines a symmetric bi-$R^1$-invariant function with a logarithmic pole on the diagonal which satisfies $\Delta G_{N,s}(\tau_1, \tau_2) = s(s - 1)G_{N,s}(\tau_1, \tau_2)$ for
fixed $\tau_2$. It has a simple pole at $s=1$ with residue $\text{Res}_{G_N}$ independent of $\tau_1,\tau_2$.
We can now define the Green's function:
\begin{equation}
G_N(\tau_1,\tau_2)\coloneqq \lim_{s\rightarrow 1}\Biggl(G_{N,s}(\tau_1,\tau_2)-\frac{\text{Res}_{G_N}}{s-1}\Biggl).
\end{equation}
Let $N=6,10,22$, as explained in the introduction, we have an isomorphism $j_{N}:X_N(\CC)\rightarrow \mathbb{P}^1(\mathbb{C})$. The following proposition relates the values of $J_N$ defined in Equation (\ref{def J_n}) and $G_N$.
\begin{proposition}\label{Greenf}
    Let $\tau_i\in \mathbf{H}$ corresponding to a CM-point associated to $K_i$ for $i=1,2$. Then
    \begin{equation}
    \begin{split}
        \log(\mid J_N(\tau_1,w_q(\tau_1),\tau_2,w_p(\tau_2)\mid^2)&=G_N(\tau_1,\tau_2)+G_N(w_q(\tau_1),w_p(\tau_2))
        \\&-G_N(w_q(\tau_1),\tau_2)-G_N(\tau_1,w_p(\tau_2)
        \end{split}
    \end{equation}
\end{proposition}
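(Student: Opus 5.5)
The plan is to show that $\log|j_N(\tau_1)-j_N(\tau_2)|^2$ and $G_N(\tau_1,\tau_2)$ differ by a function of the form $A(\tau_1)+A(\tau_2)+C$, and then observe that this kind of error cancels in the cross-ratio.

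\textbf{Step 1.} Fix $\tau_2\in\mathbf{H}$. Consider
\[h_{\tau_2}(\tau_1)\coloneqq G_N(\tau_1,\tau_2)-\log|j_N(\tau_1)-j_N(\tau_2)|^2,\]
viewed on $X_N(\CC)$ minus the single point $P_2$. We use the properties of $G_{N,s}$ recalled above (following \cite{GrossGreenfunction} \S9):
\begin{enumerate}
\item For fixed $\tau_2$, $G_N(\cdot,\tau_2)$ is $R^1$-invariant.
\item Near $\tau_2$ it has a logarithmic singularity $\log|\tau_1-\tau_2|^2+O(1)$, since $-2Q_0(\cosh d)\sim\log|\tau_1-\tau_2|^2$ as $d\to0$. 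When $\tau_2$ is an elliptic point, the multiplicity $e$ of the stabiliser must be tracked; this matches $\log|j_N-j_N(\tau_2)|^2$ in the local uniformiser.
\item Taking the limit in $\Delta G_{N,s}=s(s-1)G_{N,s}$ after subtracting the pole gives $\Delta_{\tau_1}G_N(\tau_1,\tau_2)=\text{Res}_{G_N}\cdot(\text{const})$. So $G_N(\cdot,\tau_2)$ has constant Laplacian, proportional to the hyperbolic volume form.
\end{enumerate}
Since $X_N$ is compact, there is no cusp contribution to worry about.

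\textbf{Step 2.} The function $\log|j_N(\tau_1)-j_N(\tau_2)|^2$ is harmonic away from $P_2$ and the pole $P_\infty=j_N^{-1}(\infty)$. Near $P_\infty$ it behaves like $-\log|u|^2$ in a local parameter $u$. The natural comparison is therefore
\[G_N(\tau_1,\tau_2)-G_N(\tau_1,\tau_\infty)-\log|j_N(\tau_1)-j_N(\tau_2)|^2.\]
In this difference the constant Laplacian terms cancel and the logarithmic singularities at $P_2$ and $P_\infty$ cancel. The difference is thus harmonic on the compact Riemann surface $X_N(\CC)$, hence constant in $\tau_1$. Call this constant $c(\tau_2)$.

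Using the symmetry of $G_N$, we conclude
\[\log|j_N(\tau_1)-j_N(\tau_2)|^2=G_N(\tau_1,\tau_2)-A(\tau_1)-A(\tau_2)+C,\]
with $A(\tau)=G_N(\tau,\tau_\infty)$ (suitably regularised). Indeed, $c(\tau_2)$ differs from $-G_N(\tau_2,\tau_\infty)$ by a quantity that is constant by the same argument applied in $\tau_2$.

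\textbf{Step 3.} Substitute into $\log|J_N(\tau_1,w_q(\tau_1),\tau_2,w_p(\tau_2))|^2$, which expands as
\[\log|j_N(\tau_1)-j_N(\tau_2)|^2+\log|j_N(w_q\tau_1)-j_N(w_p\tau_2)|^2-\log|j_N(w_q\tau_1)-j_N(\tau_2)|^2-\log|j_N(\tau_1)-j_N(w_p\tau_2)|^2.\]
Each of $\tau_1$, $w_q\tau_1$, $\tau_2$, $w_p\tau_2$ appears once with sign $+$ and once with sign $-$. Hence all the $A$-terms and $C$-terms cancel, which yields the stated identity.

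The CM points are used only to guarantee that the four pairs are non-equivalent and that $j_N(\cdot)\neq\infty$. For the latter we can choose $P_\infty$ to avoid them, because the cross-ratio is invariant under Möbius changes of $j_N$. Non-equivalence holds because $D_1$ and $D_2$ are coprime, so no point is CM by both $K_1$ and $K_2$. Note that Atkin–Lehner involutions preserve CM points for $K_i$.

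The main obstacle is Step 1: justifying rigorously that $\lim_{s\to1}(G_{N,s}-\text{Res}/(s-1))$ has Laplacian equal to a constant, with the right normalisation so that the contributions from $\tau_2$ and $\tau_\infty$ cancel exactly, and handling elliptic points. I would first try to cite \cite{GrossGreenfunction} for these properties and check the normalisation by comparing residues, which are independent of the point.
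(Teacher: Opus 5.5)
Your argument is correct, but it takes a slightly longer route than the paper. The paper never introduces the pole $P_\infty$ of $j_N$. It works directly with
\[
z\longmapsto \log\frac{|j_N(z)-j_N(\tau_2)|^2}{|j_N(z)-j_N(w_p(\tau_2))|^2}-\bigl(G_N(z,\tau_2)-G_N(z,w_p(\tau_2))\bigr).
\]
In this function three things cancel:
\begin{itemize}
\item the pole at $j_N^{-1}(\infty)$, inside the ratio;
\item the constant Laplacians of the two Green's functions, because $\textup{Res}_{G_N}$ does not depend on the points (this also settles the normalisation worry in your last paragraph);
\item the logarithmic singularities at $\tau_2$ and $w_p(\tau_2)$.
\end{itemize}
So this function is constant on the compact curve. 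Subtracting its values at $\tau_1$ and $w_q(\tau_1)$ gives the identity at once.

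You instead pair $\tau_2$ with the auxiliary point $\tau_\infty$. You then use the symmetry of $G_N$ and a second harmonicity argument in $\tau_2$ to reach
\[
\log|j_N(\tau_1)-j_N(\tau_2)|^2=G_N(\tau_1,\tau_2)-A(\tau_1)-A(\tau_2)+C.
\]
This is a stronger, more intrinsic statement. It is exactly the sense in which $\log|j_N(\tau_1)-j_N(\tau_2)|^2$ ``is a Green's function'', and it parallels the identity $G_N(P_1,P_2)=\langle P_1-Q,P_2-Q\rangle_{\infty}$ used in the final subsection. The price is the extra bookkeeping you already noted: smoothness of $c(\tau_2)$, obtained by moving $\tau_1$ away from $\tau_2$, and a M\"obius change of $j_N$ to keep $P_\infty$ off the four CM points.

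For the proposition itself the symmetry step is unnecessary. Already from
\[
\log|j_N(\tau_1)-j_N(\tau_2)|^2=G_N(\tau_1,\tau_2)-G_N(\tau_1,\tau_\infty)-c(\tau_2),
\]
the terms $G_N(\cdot,\tau_\infty)$ and $c(\cdot)$ cancel in the cross-ratio. This is because each of $\tau_1$, $w_q(\tau_1)$, $\tau_2$, $w_p(\tau_2)$ occurs once with each sign.
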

\begin{proof}
    The function 
    \begin{equation}
        f:z\rightarrow \log\Biggl(\frac{\mid j_{N}(z)-j_{N}(\tau_2)\mid^2}{\mid j_{N}(z)-j_{N}(w_p(\tau_2))\mid^2}\Biggl)
        -(G_N(z,\tau_2)-G_N(z,w_p(\tau_2)))
    \end{equation}
is harmonic and defined over $\mathbf{H}$ even when $z=\tau_2 \textup{ or } w_p(\tau_2) \mod R^1$ because both sides have the same logarithmic pole here. As $R^1\backslash\mathbf{H}$ is compact, this function is constant. In particular, we have
\[f(\tau_1)-f(w_q(\tau_1))=0\] 
which conclude the proof.
\end{proof}
Instead of directly relating $\mathcal{J}_N(D_1,D_2)$ to $c_1$, we will take advantage of Proposition~\ref{Greenf} and evaluate the Green's function $G_N$ at CM-points. To do so, we will rewrite the sum (\ref{eq green function}) over $\gamma\in R^1$ defining $G_N$, as a sum over $\nu\in \delta_F^{-1}\mathbf{p}\mathbf{q}$. Our main tool will be a quadratic form $\textup{det}_{F,\alpha}:B_{N}\rightarrow F$ defined in Proposition \ref{quadform}.
\subsection{Embeddings}
In this subsection, we describe the action of $\textup{Pic}(K_1)\times\textup{Pic}(K_2)$ on the embeddings and introduce the reflex ideal associated to a pair of embeddings. This will be useful to understand which elements $\nu\in F$ are represented by the quadratic form $\textup{det}_{F,\alpha}$ (see Proposition \ref{counting thm}).
\par
We will follow \cite{Voightquaternion} Chapter 30 and \cite{Daas} Section 3.4 for this part.
Let $i=1,2$ and 
\[\text{Emb}_R(\mathcal{O}_i)\coloneqq \{R^1-\text{conjugacy classes of  } \alpha:\mathcal{O}_i\xhookrightarrow{} R \},\] 
where $R^1$ acts by conjugation on the embeddings. Note that in our case, any embedding in $R$ is optimal because $R$ is the only maximal order up to conjugation.
\par
Let $\omega_p\in R$ be an element of norm $p$, let $\alpha_i\in\text{Emb}_R(\mathcal{O}_{i})$ and define $\theta_i$ as the morphism given by the following diagram
\begin{center}
\begin{tikzcd}
\mathcal{O}_{K_i} \arrow[hookrightarrow]{r}{\alpha_i}  \arrow{rd}{\theta_i} 
  & R \arrow[two heads]{d}{\pi_p}\\
    & R/\omega_p
\end{tikzcd}
\end{center}
with $\pi_p$ the projection on $R/\omega_p\simeq\mathbf{F}_{p^2}$. 
For a given $\alpha=(\alpha_1,\alpha_2)$ we have a well defined ring homomorphism 
\begin{equation}
\theta_p=\theta_1\otimes\theta_2:\mathcal{O}_L\rightarrow R/\omega_p.
\end{equation}
The restriction $\theta_p\mid_{\mathcal{O}_F}$ is not injective ($p$ is clearly in the kernel), so the kernel is an ideal of norm $p$ (there is no prime ideal of norm $p^2$ in $\mathcal{O}_F$ because $p$ splits in $\mathcal{O}_F$). The kernel must be $\mathbf{p}_1$ or $\mathbf{p}_2$. 
\begin{definition}
    The kernel of $\theta_p\mid_{\mathcal{O}_F}$ is called the reflex prime at $p$. We do the same construction for the reflex prime at $q$. We call the reflex ideal of the pair of embeddings $\alpha$ the product of its reflex prime at $p$ and at $q$.
\end{definition}
    By the commutativity of the quotient $R/\omega_p$, our definition of the reflex prime is independent of both the choice of $\omega_p$ and the representative of the class of $\alpha_i$ modulo $R^1$.
\begin{remark}
    We could also define an orientation at $p$ for each $\alpha_i$ as in \cite{BertoliniDarmon1996} section 2.2. To go from their orientation to our reflex prime, we choose a sign for the orientation, and we say that the reflex prime at $p$ is $\mathbf{p}_1$ when the orientations of the $\alpha_i$ at $p$ are the same, and $\mathbf{p}_2$ otherwise.
\end{remark}
Let $I$ be an ideal of $\mathcal{O}_{1}$ and $\alpha_1\in\text{Emb}_R(\mathcal{O}_{1})$, then $\alpha_1(I)R$ is an ideal of $R$. As $R$ is principal, we can write $\alpha_1(I)R=\zeta R$ for some $\zeta\in R$. We define the action of $I$ on $\alpha_1$ as 
\begin{equation}\label{action of Pic}
  I\mid \alpha_1\coloneqq \zeta^{-1}\alpha_1\zeta.   
\end{equation}
    Let $J=\mu I$ with $\mu\in K_1$, the commutativity of $K_1$ implies $J\mid\alpha_1=I\mid\alpha_1$. We can then define this action for $[I]\in\textup{Pic}(K_1)$.
\par
 We make $\alpha_2(I)$ acts from the right on $R$ to define the action of $\textup{Pic}(K_2)$. Let $\sigma_1,\sigma_2\in\textup{Pic}(K_1)\times\textup{Pic}(K_2)$, we will write the action of $\sigma_1,\sigma_2$ on $\alpha=(\alpha_1,\alpha_2)$ as $\alpha [\sigma_1,\sigma_2]\coloneqq (\sigma_1\mid\alpha_1,\sigma_2\mid\alpha_2)$. 
The following proposition explains how this action interacts with the reflex primes. A proof can be found in \cite{Daas} Lemma 3.4.1.
\begin{lemma}
    The action of $\textup{Pic}(K_1)\times\textup{Pic}(K_2)$ on the embeddings does not change the reflex primes at $p$ and $q$.  
\end{lemma}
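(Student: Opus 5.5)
We show that for $[I]\in\textup{Pic}(K_1)$ the reflex prime at $p$ of $(I\mid\alpha_1,\alpha_2)$ equals that of $(\alpha_1,\alpha_2)$, and likewise for $\textup{Pic}(K_2)$ acting on the right. The case of $q$ is identical with $\omega_q$ in place of $\omega_p$. Since the reflex prime is $\ker\theta_p|_{\mathcal{O}_F}$, it suffices to compare the two maps
\[\theta_p,\ \theta_p^{I}:\mathcal{O}_L\rightarrow R/\omega_p\simeq\mathbf{F}_{p^2}.\]
Each is determined by its restrictions to $\mathcal{O}_1$ and $\mathcal{O}_2$, and since $\mathcal{O}_F\subset\mathcal{O}_1\otimes\mathcal{O}_2$ (up to the usual identification), equal restrictions give equal kernels on $\mathcal{O}_F$. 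The factor coming from $\alpha_2$ is unchanged. So everything reduces to showing that $\pi_p\circ(\zeta^{-1}\alpha_1\zeta)=\pi_p\circ\alpha_1$ on $\mathcal{O}_1$, where $\alpha_1(I)R=\zeta R$.

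First we use the freedom in the class $[I]$. As noted after (\ref{action of Pic}), replacing $I$ by $\mu I$ with $\mu\in K_1$ does not change $I\mid\alpha_1$. We may therefore take $I$ integral and of norm prime to $p$. Then $\textup{nrd}(\zeta)$ equals $\textup{Nm}(I)$ up to a unit, so it is prime to $p$. Hence $\zeta\notin\omega_pR_p$: the two-sided ideal $\omega_p R$ is the unique maximal ideal of $R_p$, and it consists exactly of the elements of norm divisible by $p$. So $\pi_p(\zeta)$ is a unit in the field $R/\omega_p$.

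Next we need $\zeta^{-1}\alpha_1(x)\zeta\in R$ for $x\in\mathcal{O}_1$. We have $\alpha_1(x)\zeta R=\alpha_1(xI)R\subset\alpha_1(I)R=\zeta R$, so $\alpha_1(x)\zeta=\zeta r$ for some $r\in R$, and therefore $\zeta^{-1}\alpha_1(x)\zeta=r\in R$. This also shows that $I\mid\alpha_1$ is again an embedding into $R$. Reducing $\alpha_1(x)\zeta=\zeta r$ modulo $\omega_p$ gives
\[\pi_p(\alpha_1(x))\,\pi_p(\zeta)=\pi_p(\zeta)\,\pi_p(r).\]
Because $R/\omega_p$ is commutative and $\pi_p(\zeta)$ is invertible, $\pi_p(r)=\pi_p(\alpha_1(x))$. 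Thus $\theta_1$ is unchanged, and so is the reflex prime at $p$.

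The right action of $\textup{Pic}(K_2)$ is handled by the mirror argument, using $R\alpha_2(I)=R\zeta$. The main point needing care is the reduction to $I$ prime to $p$. If $p\mid\textup{Nm}(I)$, then $\pi_p(\zeta)=0$ and the congruence above says nothing. This is why we first move to a representative of the class coprime to $pq$, which is always possible, and then invoke the independence of $I\mid\alpha_1$ from the representative.
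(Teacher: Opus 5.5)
Your proof is correct. The paper gives no argument of its own here and simply cites Daas, Lemma 3.4.1, so your proposal supplies a self-contained proof.

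Your argument extends the paper's own remark that the reflex prime does not depend on the $R^1$-conjugacy representative, because $R/\omega_p$ is commutative. The one new point is that $\zeta$ need not be a unit. You handle this correctly: taking $I$ integral and prime to $pq$ gives $\textup{nrd}(\zeta)=\pm\textup{Nm}(I)$ prime to $p$, hence $\pi_p(\zeta)\neq 0$.

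The reduction step is optional. Since $p$ is inert in $K_1$, an integral $I$ with $p\mid\textup{Nm}(I)$ is divisible by $p\mathcal{O}_1$. The central factor $p$ then comes out of $\zeta$ without changing the conjugation.
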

Keeping in mind that the action of  $W_N\times\textup{Pic}(K_i)$ on the embeddings of $K_i$ is free and transitive, we now state how the Atkin--Lehner group $W_N$ interacts with the reflex ideal. 
Recall the definition of the Atkin--Lehner group, 
\begin{equation}\label{atkin lehner def}
    W_N\coloneqq \frac{\textup{Normaliser}_{B_N^*}(R^1)}{\mathbb{Q}^\times R^1}=\{w_k:k|N\}\cong \prod_{k|N}\mathbb{Z}/2\mathbb{Z}.
\end{equation}
The last two equalities are given early in \cite{Voightquaternion} chapter 43.
Let $p|N$, then $w_p$ acts on an embedding $\alpha_i$ of $K_i$ by conjugation by an element of $B_N$ of norm $p$. For any $k_1,k_2|N$ and $\alpha=(\alpha_1,\alpha_2)$, we write 
\[\alpha[w_{k_1},w_{k_2}]\coloneqq (w_{k_1}\vert\alpha_1,w_{k_2}\vert\alpha_2).\]
 We give Lemma 3.4.2 from \cite{Daas}.
\begin{lemma}\label{atkin lehner reflex prime}
    Let $\alpha=(\alpha_1,\alpha_2)$ with $\mathbf{p}\mathbf{q}$ as reflex ideal, then $\alpha[1,w_p]$ and $\alpha[w_p,1]$ have reflex ideal $\mathbf{p}'\mathbf{q}$. The pairs of embeddings $\alpha[1,w_q]$ and $\alpha[w_q,1]$ have reflex ideal $\mathbf{p}\mathbf{q}'$.
\end{lemma}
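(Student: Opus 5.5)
Since the reflex prime at $p$ is read off from the homomorphism $\theta_p=\theta_1\otimes\theta_2:\mathcal{O}_L\to R/\omega_p\simeq\mathbf{F}_{p^2}$, the plan is to determine how each Atkin--Lehner involution changes $\theta_1$ and $\theta_2$, and then recompute the kernel of $\theta_p|_{\mathcal{O}_F}$.

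For the prime $p$, let $w_p$ act on $\alpha_1$ by conjugation by $\omega_p\in R$ of norm $p$; this element lies in the normaliser of $R$ and generates the unique two-sided ideal of norm $p$. Conjugation by $\omega_p$ preserves $\omega_pR=R\omega_p$, so it induces an automorphism of $R/\omega_p\simeq\mathbf{F}_{p^2}$. This automorphism is nontrivial. In the local division algebra $B_N\otimes\mathbb{Q}_p$, a uniformiser $\Pi$ satisfies $\Pi x\Pi^{-1}=\bar x$ (Frobenius) for $x$ in the unramified quadratic subfield. The residue field is generated by the image of that subfield, so conjugation by $\omega_p$ reduces to the Frobenius of $\mathbf{F}_{p^2}$. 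Hence $\theta_1$ is replaced by $\mathrm{Frob}\circ\theta_1=\theta_1\circ c_1$, where $c_1$ is complex conjugation on $K_1$. This uses that $p$ is inert in $K_1$, so $\mathcal{O}_1/p\simeq\mathbf{F}_{p^2}$ and Frobenius there is induced by $c_1$.

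Consequently the new map is $\theta_p^{\mathrm{new}}=\theta_p\circ(c_1\otimes 1)$ on $\mathcal{O}_L$. The automorphism $c_1\otimes1$ of $L$ is the nontrivial element of $\mathrm{Gal}(L/K_2)$. It restricts to the nontrivial automorphism of $F$, because it sends $\sqrt{D_1}\sqrt{D_2}$ to $-\sqrt{D_1}\sqrt{D_2}$. The new kernel on $\mathcal{O}_F$ is therefore the conjugate $\mathbf{p}'$ of the old reflex prime $\mathbf{p}$. Applying $w_p$ to $\alpha_2$ instead gives $\theta_p\circ(1\otimes c_2)$, which also induces conjugation on $F$, so again the reflex prime at $p$ becomes $\mathbf{p}'$.

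Next I check that $w_p$ does not change the reflex prime at $q$. Locally at $q$, an element of norm $p$ is a unit of $R\otimes\mathbb{Z}_q$. Conjugation by a unit induces an inner automorphism of the commutative field $R/\omega_q$, which is the identity. So $\theta_q$ is unchanged and the reflex prime at $q$ stays $\mathbf{q}$. The statements for $w_q$ follow by exchanging the roles of $p$ and $q$.

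The main obstacle is the local claim that conjugation by a norm-$p$ element induces the nontrivial automorphism of $R/\omega_p$. I would prove it with the standard presentation $R_p=\mathcal{O}_{\mathbb{Q}_{p^2}}+\Pi\mathcal{O}_{\mathbb{Q}_{p^2}}$, where $\Pi^2=p$ and $\Pi a=\bar a\Pi$. Any $\omega$ of norm $p$ can be written $u\Pi$ with $u\in R_p^\times$. Conjugation by $u$ acts trivially on the commutative residue field, so conjugation by $\omega$ acts on it as conjugation by $\Pi$, which is Frobenius. The result also does not depend on the choice of representative $\omega_p$ or on $R^1$-conjugation, by the same commutativity remark the paper makes after the definition of reflex primes.
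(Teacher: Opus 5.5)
Your proof is correct. Note that the paper does not prove this lemma itself: it simply quotes Lemma 3.4.2 of Daas's thesis. Your proposal therefore supplies a self-contained argument rather than an alternative to one written in the text.

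Your route is a direct local computation, and each step holds:
\begin{itemize}
\item A norm-$p$ element can be written $\omega_p=u\Pi$ with $u\in R_p^\times$. Conjugation by it acts on $R/\omega_p\simeq\mathbf{F}_{p^2}$ as conjugation by $\Pi$, which is Frobenius.
\item Because $p$ is inert in $K_i$, this Frobenius equals precomposition $\theta_i\mapsto\theta_i\circ c_i$.
\item Hence $\theta_p$ is precomposed with an element of $\mathrm{Gal}(L/\mathbb{Q})$ that acts nontrivially on $F$. Its kernel on $\mathcal{O}_F$ therefore moves from $\mathbf{p}$ to $\mathbf{p}'$, which is a different prime since $p$ splits in $F$.
\item At $q$ the element $\omega_p$ is a unit of $R_q$. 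Conjugating by it is trivial on the commutative quotient $R/\omega_q$, so the reflex prime at $q$ is unchanged.
\end{itemize}

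This is the same phenomenon the paper alludes to in its Remark relating reflex primes to the orientations of Bertolini--Darmon. There, $w_p$ reverses the orientation at $p$ of whichever embedding it acts on and preserves the orientation at $q$. So whether the two orientations agree flips at $p$ and not at $q$, and with it the reflex prime.

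Your version has two advantages over that formulation. It avoids any sign convention for orientations. It also makes explicit where each hypothesis enters: inertness of $p$ in $K_i$ identifies Frobenius with $c_i$, and $\mathcal{O}_L=\mathcal{O}_1\mathcal{O}_2$ (from coprime discriminants) lets $c_1\otimes 1$ act on the domain of $\theta_p$.
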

In the next part, we define an $F$-quadratic form $\textup{det}_{F,\alpha}$. The reflex prime and the action of $\textup{Pic}(K_1)\times\textup{Pic}(K_2)$ play a crucial role in the way we count representation numbers of this form.

\subsection{\texorpdfstring{An $F$-quadratic form}{An F-quadratic form}}\label{F quad form}
In this subsection, we construct a quadratic form $\textup{det}_{F,\alpha}:B_{N}\rightarrow F$ for each pair of embeddings $\alpha=(\alpha_1,\alpha_2)$. We use it in the next subsection to rewrite the series in $\gamma\in R^1$ appearing in Equation (\ref{eq green function}) as a series in $\nu\in F$ to relate it to $c_1$. To do so, we will need the explicit expression of $\textup{det}_{F,\alpha}$ given by Proposition \ref{quadform} and a way to count how many times each $\nu\in F$ is represented by this quadratic form. This will be the object of Proposition \ref{counting thm}. The results of this subsection are an analog to indefinite quaternion algebra of the results of \cite{Daas} section 4. We merely state them, as the proofs in the indefinite case are almost identical.
\par
Let $\text{Nm}:B_{N}\rightarrow \mathbb{Q}$ be the quaternionic norm, an indefinite quadratic form. We will refine this form to an $F$-quadratic form by taking the trace of $F/\mathbb{Q}$. To do so, let's fix $\alpha_i$ an embdedding of $\mathcal{O}_i$ as before.
\par
The pair $\alpha=(\alpha_1,\alpha_2)$ turns $B_{N}$ into a 1-dimensional $L$-vector space  ($L$ and $B_N$ are both $\mathbb{Q}$-vector space of dimension 4). Let $x_i\in K_i$, then the action of $(x_1,x_2)$ on $\gamma\in B_{N}$ is defined by $(x_1,x_2)\ast \gamma=\alpha_1(x_1)\gamma\alpha_2(x_2)$. Because $L=K_1K_2$, we extend to $L$ by $\mathbb{Q}$-linearity. We give the archimedean equivalent of Theorem 4.4.9 of \cite{Daas}.
\begin{proposition}\label{quadform}
    There is a unique $F$-quadratic form $\textup{det}_{F,\alpha}:B_{N}\rightarrow F$ such that for all $\gamma\in B_{N}$, 
    \[\textup{Tr}_{F/\mathbb{Q}}(\textup{det}_{F,\alpha}(\gamma))=\textup{Nm}(\gamma).\]
Furthermore, if $\alpha_i(\sqrt{D_i})=\begin{pmatrix} b_i & 2c_i \\ 2a_i & -b_i
\end{pmatrix}$ and $\tau_i=\frac{-b_i+\sqrt{D_i}}{2a_i}$, then
\begin{equation}
    \textup{det}_{F,\alpha}(\gamma)=-\textup{Nm}(\gamma)\frac{\mid \tau_1-\gamma\tau_2\mid^2}{4\text{\normalfont{Im}}(\tau_1)\text{\normalfont{Im}}(\gamma\tau_2) }.
\end{equation}
\end{proposition}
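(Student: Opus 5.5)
Existence and uniqueness come first, by linear algebra over $L$; the explicit formula is then checked by a direct computation in $M_2(\mathbb{R})$.

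\textbf{Existence and uniqueness.} Via $\alpha=(\alpha_1,\alpha_2)$, $B_N$ is a one-dimensional $L$-vector space. Fix $\gamma_0\neq 0$, so every element is $x\ast\gamma_0$ with $x\in L$. For $x_i\in K_i$,
\[\textup{Nm}((x_1,x_2)\ast\gamma)=\textup{Nm}(\alpha_1(x_1))\,\textup{Nm}(\gamma)\,\textup{Nm}(\alpha_2(x_2))=N_{K_1}(x_1)N_{K_2}(x_2)\textup{Nm}(\gamma).\]
Hence $\textup{Nm}(x\ast\gamma_0)$ is a $\mathbb{Q}$-quadratic form in $x\in L$ that is invariant under multiplication by the norm-one elements of the compositum of the relevant norm maps. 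The candidate is
\[\textup{det}_{F,\alpha}(x\ast\gamma_0)=\textup{Nm}_{L/F}(x)\,c\]
for a suitable $c\in F$. Indeed, the $\mathbb{Q}$-quadratic forms on $L$ that are "Hermitian" for $L/F$ are exactly $x\mapsto \textup{Tr}_{F/\mathbb{Q}}(c\,\textup{Nm}_{L/F}(x))$, $c\in F$. To place $\textup{Nm}$ in this family, I will check that its associated bilinear form $\langle x\ast\gamma_0,y\ast\gamma_0\rangle$ satisfies $\langle ux,uy\rangle=\textup{Nm}_{L/F}(u)\langle x,y\rangle$ only after the trace. Concretely, decompose $\textup{Nm}(x\ast\gamma_0)$ as $\textup{Tr}_{L/\mathbb{Q}}(a\,x\bar x)$ plus a term involving $x\,\sigma(x)$, where $\sigma$ is an automorphism other than the conjugation over $F$. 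The extra term must vanish because $\textup{Nm}(\alpha_1(x_1)\gamma_0)=N(x_1)\textup{Nm}(\gamma_0)$ already forces the dependence to be only through $N_{K_1}(x_1)$ and $N_{K_2}(x_2)$. A cleaner way to see this: the bilinear form is determined by its values on the $\mathbb{Q}$-basis $\{1,\sqrt{D_1},\sqrt{D_2},\sqrt{D_1}\sqrt{D_2}\}$, and the norm multiplicativity computation above shows $\langle\sqrt{D_1},1\rangle=\langle\sqrt{D_2},1\rangle=0$. The pairing $\langle 1,\sqrt{D_1}\sqrt{D_2}\rangle$ is the $\sqrt D$-part of $c$.

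Uniqueness follows because $(x,y)\mapsto \textup{Tr}_{F/\mathbb{Q}}(cxy)$ is nondegenerate in $c$. Equivalently, $\textup{Tr}_{F/\mathbb{Q}}(\textup{det}_{F,\alpha}(\gamma))=0$ for all $\gamma$, combined with $F$-homogeneity $\textup{det}_{F,\alpha}(u\ast\gamma)=u^2\det(\gamma)$ for $u\in F$, gives $\textup{Tr}(u^2c)=0$ for all $u$, hence $c=0$. This is the analogue of Theorem 4.4.9 of \cite{Daas}, and the argument is purely algebraic, so it transfers verbatim from the definite case.

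\textbf{Explicit formula.} Working in $M_2(\mathbb{R})$, I write $\textup{det}_{F,\alpha}(\gamma)=A(\gamma)+B(\gamma)\sqrt D$ with
\[A=\tfrac12\textup{Nm}(\gamma),\qquad B(\gamma)=\frac{\textup{Tr}(\alpha_1(\sqrt{D_1})\,\gamma\,\alpha_2(\sqrt{D_2})\,\gamma^\iota)}{4D}\]
up to sign, where $\iota$ is the quaternion conjugation. The natural candidate is
\[\textup{det}_{F,\alpha}(\gamma)=\tfrac12\textup{Nm}(\gamma)+\frac{\textup{Tr}(\alpha_1(\sqrt{D_1})\gamma\alpha_2(\sqrt{D_2})\gamma^{\iota})}{4\sqrt{D}},\]
and I will verify that it is $F$-quadratic: multiplying $\gamma$ by $\sqrt{D_1}\ast$ multiplies it by $D_1\cdot$(conjugate-swap), and one checks it scales by $\textup{Nm}_{L/F}$. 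Its trace is visibly $\textup{Nm}(\gamma)$, so by uniqueness this candidate is $\textup{det}_{F,\alpha}$.

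It remains to compute the trace term. Set $\gamma'=\gamma\alpha_2(\sqrt{D_2})\gamma^{-1}$, whose fixed point is $\gamma\tau_2$. Then
\[\textup{Tr}(\alpha_1(\sqrt{D_1})\gamma')\,\textup{Nm}(\gamma)\]
is the trace pairing of two matrices $\begin{pmatrix} b&2c\\2a&-b\end{pmatrix}$ with discriminants $D_1,D_2$ and fixed points $\tau_1,\gamma\tau_2$. The classical identity
\[\textup{Tr}(M_1M_2)=-\sqrt{D_1D_2}\,\Bigl(1+\frac{|\tau_1-\tau_2'|^2}{2\textup{Im}\tau_1\textup{Im}\tau_2'}\Bigr)\]
(up to orientation sign), for $M_i$ normalized with $a_i>0$ and $\tau_i$ their upper fixed points, is checked by writing $M_i=\sqrt{|D_i|}\,g_i\begin{pmatrix}0&-1\\1&0\end{pmatrix}g_i^{-1}$ and reducing to a $\cosh$ of the hyperbolic distance.

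Substituting gives
\[\textup{det}_{F,\alpha}=\tfrac12\textup{Nm}(\gamma)\bigl(1-(1+2X)\bigr)=-\textup{Nm}(\gamma)X,\qquad X=\frac{|\tau_1-\gamma\tau_2|^2}{4\textup{Im}\tau_1\textup{Im}\gamma\tau_2}.\]
This is the claimed formula. It holds for the embedding of $F$ into $\mathbb{R}$ in which $\sqrt D=\sqrt{D_1}\sqrt{D_2}$ corresponds to the product of the chosen square roots; this sign convention determines which real embedding is used.

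\textbf{Main obstacle.} The main difficulty is bookkeeping the signs and normalization: the factor $2$ in $\alpha_i(\sqrt{D_i})$, the choice of $\sqrt D$, and the sign of $\textup{Nm}(\gamma)$ when $\gamma\notin R^1$. These must all be handled consistently so that the constant term cancels exactly.
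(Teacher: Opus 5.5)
Your proposal is correct in substance, but it takes a different route from the paper, and four steps need repair.

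\textbf{Comparison with the paper.} The paper gives no proof of this proposition. It cites Daas, Theorem~4.4.9 (the definite case) and says the indefinite argument is ``almost identical''. So the one genuinely archimedean ingredient, the hyperbolic-distance expression, is never derived there. Your route is self-contained:
\begin{enumerate}
\item uniqueness from nondegeneracy of $\textup{Tr}_{F/\mathbb{Q}}$;
\item the explicit candidate $\tfrac12\textup{Nm}(\gamma)+\mathrm{trd}(M_1\gamma M_2\gamma^\iota)/(4\sqrt D)$, with $M_i=\alpha_i(\sqrt{D_i})$ and $\sqrt D=\sqrt{D_1}\sqrt{D_2}\in F$;
\item the trace pairing of two elliptic elements, rewritten as $\cosh$ of the hyperbolic distance.
\end{enumerate}
The candidate is right: it is exactly the $\beta\in F$ with $\textup{Nm}(x\ast\gamma)=\textup{Tr}_{F/\mathbb{Q}}(\beta\,\textup{Nm}_{L/F}(x))$. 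This route also shows exactly where the choice of real embedding of $F$ enters, which the paper leaves vague.

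\textbf{Existence step.} As written, this paragraph is muddled. The values $\langle\sqrt{D_1},1\rangle=\langle\sqrt{D_2},1\rangle=0$ do not by themselves pin down the Gram matrix. What you need is the adjointness $\langle x\ast\gamma,\delta\rangle=\langle\gamma,\bar x\ast\delta\rangle$ for $x\in L$, where the bar is the conjugation of $L/F$. This is immediate from $\langle\gamma,\delta\rangle=\mathrm{trd}(\gamma\delta^\iota)$, because $\iota$ restricts to conjugation on $\alpha_i(K_i)$. It gives $\langle x\ast\gamma_0,y\ast\gamma_0\rangle=\Phi(x\bar y)$, and your vanishing computations then force $\Phi=\textup{Tr}_{L/\mathbb{Q}}(\beta\,\cdot)$ with $\beta\in F$. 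Simpler still, skip this and check directly that the polar form of your candidate,
\[B(\gamma,\delta)=\tfrac12\mathrm{trd}(\gamma\delta^\iota)+\mathrm{trd}(M_1\gamma M_2\delta^\iota)/(2\sqrt D),\]
satisfies $B(M_1\gamma M_2,\delta)=\sqrt D\,B(\gamma,\delta)$, using $M_i^2=D_i$. Uniqueness then finishes.

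\textbf{Factor of $2$.} Your classical identity is off by a factor $2$. The correct form is $\mathrm{tr}(M_1M_2)=-2\epsilon_1\epsilon_2\sqrt{|D|}\cosh d(z_1,z_2)$, where $z_i\in\mathbf{H}$ are the fixed points and $\epsilon_i=\mathrm{sgn}(a_i)$. Test it with $M_i=\sqrt{|D_i|}\begin{pmatrix}0&-1\\1&0\end{pmatrix}$: the trace is $-2\sqrt{|D|}$. With your version, the substitution gives $\tfrac14\textup{Nm}(\gamma)(1-2X)$ rather than $-\textup{Nm}(\gamma)X$. Your final line is correct only once the $2$ is restored.

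\textbf{Embedding.} Your rule for the real embedding has the wrong sign. With $a_1,a_2>0$ the formula needs $\sqrt{D_1}\sqrt{D_2}\mapsto+\sqrt{|D|}$. That is \emph{minus} the complex product $i\sqrt{|D_1|}\cdot i\sqrt{|D_2|}$ of the roots giving the fixed points in $\mathbf{H}$. To check, take the same $M_i$ and $\gamma=1$, with $\sqrt D\mapsto\eta\sqrt{|D|}$. The candidate equals $\tfrac12-\tfrac1{2\eta}$, which vanishes if and only if $\eta=1$. Under the other embedding you get the conjugate $\textup{Nm}(\gamma)(1+X)$, which is consistent with the Lemma that follows in the paper.

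\textbf{Sign of $\textup{Nm}(\gamma)$.} Your conjugation argument only covers $\textup{Nm}(\gamma)>0$. If $\textup{Nm}(\gamma)<0$, then $\gamma\tau_2$ lies in the lower half-plane and conjugation by $\gamma$ flips $\epsilon_2$. The formula still holds for all $\gamma$. For $\gamma=\begin{pmatrix}a&b\\c&d\end{pmatrix}$ the right-hand side equals
\[-\frac{|(c\tau_2+d)\tau_1-(a\tau_2+b)|^2}{4\textup{Im}(\tau_1)\textup{Im}(\tau_2)},\]
which is a quadratic polynomial in the entries of $\gamma$. So agreement on the open set $\{\textup{Nm}>0\}$ suffices. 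In any case the paper only uses $\textup{Nm}(\gamma)=m>0$.

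\textbf{Fixed points.} The fixed point in $\mathbf{H}$ of $\begin{pmatrix}b_i&2c_i\\2a_i&-b_i\end{pmatrix}$ is $(b_i+\sqrt{D_i})/(2a_i)$ for the appropriate root. This is not the $\tau_i$ literally written in the statement, and you are right to use the actual fixed point.
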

Let $\overline{\alpha_i}$ be the composition of $\alpha_i$ and the non trivial automorphism of $K_i$. It follows from Corollary 4.4.5 in \cite{Daas} that if $\alpha^*=(\overline{\alpha_1},\alpha_2)$, then $\textup{det}_{F,\alpha}(\gamma)$ and $\textup{det}_{F,\alpha^*}(\gamma)$ are galois conjugate in $F$. Because of the symmetry in $\nu$ and $\nu'$ in $c_1$, the choice of square root $\sqrt{D_i}$ is not important in this proposition.
Furthermore, taking $\overline{\alpha_1}$ will swap the role of $\tau_1$ and $\overline{\tau_1}$. This gives us the archimedian equivalent of Lemma 6.1.2.
\begin{lemma}
    With the same notation as before, let $\gamma\in B_N$, then 
    \[ \frac{\textup{det}_{F,\alpha}(\gamma)}{\textup{det}_{F,\alpha}(\gamma)'}=-\frac{\mid\tau_1-\gamma\tau_2\mid^2}{\mid\tau_1-\gamma\overline{\tau_2}\mid^2}.\]
\end{lemma}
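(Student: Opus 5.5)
The plan is to compute the conjugate $\textup{det}_{F,\alpha}(\gamma)'$ as $\textup{det}_{F,\alpha^*}(\gamma)$, using the remark preceding the lemma: by Corollary 4.4.5 of \cite{Daas}, replacing $\alpha_1$ by $\overline{\alpha_1}$ Galois-conjugates the value of $\textup{det}_{F,\alpha}(\gamma)$ in $F$. Then both numerator and denominator can be evaluated with the explicit formula of Proposition \ref{quadform}, and the ratio simplifies.

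First I will identify the fixed point attached to $\overline{\alpha_1}$. Since $\overline{\alpha_1}(\sqrt{D_1})=\alpha_1(-\sqrt{D_1})=\begin{pmatrix} -b_1 & -2c_1\\ -2a_1 & b_1\end{pmatrix}$, the recipe of Proposition \ref{quadform} gives the parameters $(-a_1,-b_1,-c_1)$. The associated point is $\frac{b_1+\sqrt{D_1}}{-2a_1}=\frac{-b_1-\sqrt{D_1}}{2a_1}=\overline{\tau_1}$. This point lies in the lower half plane, so I will apply the formula formally with $\overline{\tau_1}$ in place of $\tau_1$. I will justify this by noting that the proof of Proposition \ref{quadform} is a purely algebraic identity in the coordinates $a_i,b_i,c_i$, which remains valid after the sign change. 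The term $\textup{Im}(\tau_1)$ becomes $\textup{Im}(\overline{\tau_1})=-\textup{Im}(\tau_1)$. Hence
\[\textup{det}_{F,\alpha}(\gamma)'=\textup{det}_{F,\alpha^*}(\gamma)=\textup{Nm}(\gamma)\frac{|\overline{\tau_1}-\gamma\tau_2|^2}{4\textup{Im}(\tau_1)\textup{Im}(\gamma\tau_2)}.\]

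Taking the quotient with the formula for $\textup{det}_{F,\alpha}(\gamma)$, the factors $\textup{Nm}(\gamma)$, $4$, $\textup{Im}(\tau_1)$ and $\textup{Im}(\gamma\tau_2)$ cancel. This leaves the ratio $-|\tau_1-\gamma\tau_2|^2/|\overline{\tau_1}-\gamma\tau_2|^2$. Since $\gamma\in B_N$ acts through a real matrix, complex conjugation commutes with its Möbius action, so $\gamma\overline{\tau_2}=\overline{\gamma\tau_2}$. Therefore $|\overline{\tau_1}-\gamma\tau_2|=|\tau_1-\overline{\gamma\tau_2}|=|\tau_1-\gamma\overline{\tau_2}|$, which gives the stated identity. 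I will add a caveat that $\textup{Nm}(\gamma)\neq 0$ and $\tau_1\neq\gamma\overline{\tau_2}$, so the ratio makes sense for $\gamma\neq0$.

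The main obstacle is the sign bookkeeping in the second step. One has to check that Proposition \ref{quadform} really extends to the conjugate embedding with $\overline{\tau_1}$ substituted, including the sign of the imaginary part, rather than silently renormalizing $\tau_1$ back to the upper half plane. If that extension is doubtful, I will use a direct check instead. Since $\textup{det}_{F,\alpha}(\gamma)+\textup{det}_{F,\alpha}(\gamma)'=\textup{Nm}(\gamma)$, I can compute $\textup{det}_{F,\alpha}(\gamma)'=\textup{Nm}(\gamma)-\textup{det}_{F,\alpha}(\gamma)$. It then suffices to verify the elementary identity
\[4\textup{Im}(\tau_1)\textup{Im}(w)+|\tau_1-w|^2=|\overline{\tau_1}-w|^2, \qquad w=\gamma\tau_2.\]
This holds by expanding $|\tau_1-w|^2$ and $|\overline{\tau_1}-w|^2$ in real and imaginary parts, and it yields the same conclusion without appealing to \cite{Daas}.
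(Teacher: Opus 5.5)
Your main argument is the paper's own: pass to $\alpha^*=(\overline{\alpha_1},\alpha_2)$, use Corollary 4.4.5 of \cite{Daas} to identify $\textup{det}_{F,\alpha^*}(\gamma)$ with $\textup{det}_{F,\alpha}(\gamma)'$, note that this replaces $\tau_1$ by $\overline{\tau_1}$ in the formula of Proposition \ref{quadform}, then cancel the common factors and use $\gamma\overline{\tau_2}=\overline{\gamma\tau_2}$. Your fallback is also correct and is the cleaner proof. It uses $\textup{det}_{F,\alpha}(\gamma)+\textup{det}_{F,\alpha}(\gamma)'=\textup{Nm}(\gamma)$ together with $|\overline{\tau_1}-w|^2=|\tau_1-w|^2+4\,\textup{Im}(\tau_1)\textup{Im}(w)$. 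It needs only the two statements of Proposition \ref{quadform}, and so avoids having to justify evaluating the formula for the conjugate embedding at a point of the lower half-plane.
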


\begin{remark}\label{remark sign det}
    A direct consequence of this lemma is that $\textup{det}_{F,\alpha}(\gamma)$ is of negative norm. When we work in a definite quaternion algebra like in \cite{Daas}, this element is always totally positive (Proposition 4.4.3).  Looking at $c_1$ from Corollary \ref{cor}, we are indeed taking the sum over $\nu$ of negative norm. Now taking $\nu=\textup{det}_{F,\alpha}(\gamma)$ and counting how many times each $\nu$ appears seems a reasonable strategy.
\end{remark}
We now need to know how many times each $\nu \in \delta_F^{-1}\mathbf{p}\mathbf{q} $ appears, this next proposition is a consequence of the discussions in \cite{HowardYang}. An explicit bijection is also given by Theorem 4.0.2 in \cite{Daas}. The proof is given for a definite quaternion algebra but we can adapt it to our setting. 

\begin{proposition}
\label{counting thm}
    Let $\alpha=(\alpha_1,\alpha_2)$ and $\mathbf{p}\mathbf{q}$ the reflex ideal associated to $\alpha$. Let $\nu\in F$ of negative norm, then,
    \begin{align*}
    \#\{(\gamma,\sigma_1,\sigma_2)\in \mathcal{O}_1^\times\backslash R/ \mathcal{O}_2^\times\times \textup{Pic}(K_1)\times\textup{Pic}(K_2),\,\, \textup{det}_{F,\alpha[\sigma_1,\sigma_2]}(\gamma)=\nu\}
    =\sigma_{0,\chi}((\delta_F^{-1}\mathbf{p}\mathbf{q})^{-1}(\nu)).
        \end{align*}
\end{proposition}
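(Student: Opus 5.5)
The plan is to adapt the proof of Theorem 4.0.2 of \cite{Daas}, replacing the definite algebra by $B_N$ and total positivity by the sign condition of Remark \ref{remark sign det}. The argument proceeds through the $L$-vector space structure. Fix $\alpha=(\alpha_1,\alpha_2)$. The order $R$ is then a module over $\mathcal{O}_L$, or at least over $\mathcal{O}_1\otimes\mathcal{O}_2$, and $\textup{det}_{F,\alpha}$ becomes a rank one $F$-hermitian form on the one-dimensional $L$-space $B_N$. The first step is to show that there is a fractional ideal $\mathfrak{b}_\alpha$ of $L$ and an element $\lambda_\alpha\in F^\times$ such that $\textup{det}_{F,\alpha}(x)=\lambda_\alpha\textup{Nm}_{L/F}(x)$ under the identification $R\simeq\mathfrak{b}_\alpha$. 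One then checks that the image $\textup{det}_{F,\alpha}(R)$ lies in $\delta_F^{-1}\mathbf{p}\mathbf{q}$, with $\mathbf{p}\mathbf{q}$ the reflex ideal. This is a local computation. At the primes not dividing $N$, the trace form $\textup{Tr}_{F/\mathbb{Q}}\circ\textup{det}_{F,\alpha}=\textup{Nm}$ is integral on $R$, which forces $\textup{det}_{F,\alpha}(R)\subset\delta_F^{-1}$. At $p$ (and likewise $q$), the map $\theta_p$ shows that the reflex prime kills the reduction of $\textup{det}_{F,\alpha}$ modulo $\omega_p$, so $\textup{det}_{F,\alpha}(R)\subset\mathbf{p}$ locally.

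The second step is to translate the count into ideal theory. An element $\gamma$ with $\textup{det}_{F,\alpha}(\gamma)=\nu$ gives an integral ideal $\mathfrak{A}=\gamma\mathfrak{b}_\alpha^{-1}$ of $\mathcal{O}_L$ satisfying
\[\textup{Nm}_{L/F}(\mathfrak{A})=(\delta_F^{-1}\mathbf{p}\mathbf{q})^{-1}(\nu).\]
Modding out by $\mathcal{O}_1^\times\times\mathcal{O}_2^\times$ on the two sides, whose image is exactly the group of units of $\mathcal{O}_L$ of relative norm $1$ arising from $K_1,K_2$, makes the map from classes $\gamma$ to ideals $\mathfrak{A}$ injective. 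Running over all twisted embeddings $\alpha[\sigma_1,\sigma_2]$ replaces $\mathfrak{b}_\alpha$ by $\mathfrak{b}_\alpha\cdot(\sigma_1\sigma_2)$. By the lemma preceding Lemma \ref{atkin lehner reflex prime}, the reflex ideal is preserved along the way. One expects the union over $G=\textup{Pic}(K_1)\times\textup{Pic}(K_2)$ to give a bijection with all integral ideals $\mathfrak{A}\subset\mathcal{O}_L$ of the prescribed relative norm, once the correct genus/sign condition is imposed. The image of $\textup{Pic}(K_1)\times\textup{Pic}(K_2)\to\textup{Pic}(L)$ covers the part of the relative class group relevant here, namely the classes of norm in the class of $\delta_F^{-1}\mathbf{p}\mathbf{q}$, up to the index-$2$ genus issue. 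That genus issue is exactly where $\chi(\delta_F^{-1}\mathbf{p}\mathbf{q})=-1$ is used: it selects the classes of norm $(\nu)$ that are representable in $B_N$ and not in the matrix algebra.

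The final step is the classical count. For an integral ideal $\mathfrak{n}$ of $\mathcal{O}_F$, the number of ideals $\mathfrak{A}\subset\mathcal{O}_L$ with $\textup{Nm}_{L/F}\mathfrak{A}=\mathfrak{n}$ is $\sum_{J\mid\mathfrak{n}}\chi(J)=\sigma_{0,\chi}(\mathfrak{n})$, since $L/F$ is unramified and corresponds to $\chi$. The sign condition that $\nu$ has negative norm replaces total positivity: by the lemma preceding Remark \ref{remark sign det}, every $\textup{det}_{F,\alpha}(\gamma)$ has negative norm. Conversely, a local–global argument (Hasse–Minkowski for the quaternary form $\textup{Nm}$, together with the ramification of $B_N$ exactly at $p,q$ and its splitting at $\infty$) shows that an element of negative norm with $\sigma_{0,\chi}\neq0$ is represented.

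I expect the main obstacle to be the surjectivity and normalisation in the second step. One has to check that each ideal $\mathfrak{A}$ of the right norm arises from exactly one triple $(\gamma,\sigma_1,\sigma_2)$. This requires controlling the kernel of $\textup{Pic}(K_1)\times\textup{Pic}(K_2)\to\textup{Pic}(L)$ and the unit index $[\mathcal{O}_L^\times:\mathcal{O}_1^\times\mathcal{O}_2^\times]$, which must cancel exactly with no stray factor of $2$ or $w_1w_2$. I would first settle this by comparing with Daas's explicit bijection (Theorem 4.0.2 of \cite{Daas}) and checking that his construction uses only the $L$-module structure and local data at $p,q$, which are identical here; only the archimedean sign changes.
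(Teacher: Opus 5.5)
Your route is the paper's own: the paper offers no argument beyond citing Daas's Theorem 4.0.2 and Howard--Yang and asserting that the definite case adapts. The issue is that the adaptation you sketch contains a step that is false as stated.

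\textbf{The converse step.} You claim that every $\nu$ of negative norm with $\sigma_{0,\chi}\neq 0$ is represented. On $R\simeq\mathfrak b_\alpha$ you have $\textup{det}_{F,\alpha}=\lambda_\alpha\textup{Nm}_{L/F}$, and $\textup{Nm}_{L/F}$ is totally positive because $L$ is CM. So every nonzero value has the sign pattern of $\lambda_\alpha$. By Proposition \ref{quadform}, with both fixed points in $\mathbf H$, that pattern is $\nu<0<\nu'$. It is constant on the $G$-orbit, because twisting by $\zeta$ of positive norm rescales $\lambda$ by the positive rational $1/\textup{Nm}(\zeta)$.

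Hence $-\nu$ is never represented. Yet it has negative norm and generates the same ideal as $\nu$, so it has the same $\sigma_{0,\chi}$. The count can therefore only hold for $\nu<0<\nu'$. The hypothesis ``negative norm'' in the statement is too weak, and the sign-restricted case is the one actually used in Proposition \ref{valeur c1}.

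Hasse--Minkowski for the quaternary $\QQ$-form $\textup{Nm}$ cannot detect this. It only sees $\textup{Tr}_{F/\QQ}(\nu)$, and it gives neither integrality nor multiplicities.

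\textbf{What actually has to be proved.} The real content is the bookkeeping you postpone. For $\nu<0<\nu'$ you need two facts:
\begin{itemize}
\item the image of $G\to\textup{Pic}(L)$, $(\mathfrak a_1,\mathfrak a_2)\mapsto[\mathfrak a_1\mathfrak a_2\mathcal O_L]$, is the whole kernel of $\textup{Nm}_{L/F}:\textup{Pic}(L)\to\textup{Pic}^+(F)$;
\item the kernel of that map sends bijectively onto $\mathcal O_F^{\times,+}/\textup{Nm}_{L/F}(\mathcal O_L^\times)$ via $(\mathfrak c_1,\mathfrak c_2)\mapsto \textup{Nm}_{L/F}(\beta)/N(\mathfrak c_1)N(\mathfrak c_2)$, where $\mathfrak c_1\mathfrak c_2\mathcal O_L=(\beta)$.
\end{itemize}
Together these say exactly that each integral $\mathfrak A$ of norm $\mathfrak n$ comes from exactly one triple. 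So your index-$2$ issue is this unit index, not a genus condition involving $\chi(\delta_F^{-1}\mathbf p\mathbf q)$. Your sketch establishes neither fact, so the citation still carries the entire proof.

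\textbf{Two corrections to your first step.} First, with the paper's literal definition (reflex prime $=\ker\theta_p|_{\mathcal O_F}$), the local computation gives the \emph{conjugate} prime. Take $f\in\mathbf p$ with $f\equiv 1\bmod \mathbf p'$. Since $\theta_p(f)=0$ and $R/\omega_pR$ is commutative, $f\ast R\subset\omega_pR$. Hence $p$ divides $\textup{Nm}(f\ast\gamma)=\textup{Tr}_{F/\QQ}(f^2\,\textup{det}_{F,\alpha}(\gamma))$. Because $\textup{det}_{F,\alpha}(\gamma)\in\delta_F^{-1}$ is $p$-integral, this trace is congruent mod $p$ to the image of $\textup{det}_{F,\alpha}(\gamma)$ in $F_{\mathbf p'}=\QQ_p$. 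So $\textup{det}_{F,\alpha}(R)\subset\delta_F^{-1}\mathbf p'\mathbf q'$. Meanwhile $\textup{det}_{F,\alpha}(1)\notin\mathbf p$, since its trace is $1$. You must therefore swap the normalisation. This is harmless later, because $(-1)^{i+j}$ is invariant under conjugating both primes.

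Second, you need the equality $\lambda_\alpha\textup{Nm}_{L/F}(\mathfrak b_\alpha)=\delta_F^{-1}\mathbf p'\mathbf q'$, not just an inclusion, to get $\textup{Nm}_{L/F}(\mathfrak A)=\mathfrak n$ on the nose. This follows by comparing two discriminants: that of $(R,\textup{Nm})$, which is $N^2$, and that of the trace form on $\mathfrak b_\alpha$, which is $D^2\,\textup{Nm}_{F/\QQ}(\lambda_\alpha\textup{Nm}_{L/F}\mathfrak b_\alpha)^2$.
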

\subsection{End of the proof}
We start this Subsection by proving the following result stated in the Introduction.
\begin{proposition}\label{rewriting J_n}
    \begin{equation}
\mathcal{J}_N(D_1,D_2)
=\Biggl(\prod_{\substack{(\sigma_1,\sigma_2)\in G}}
\biggl|J_N\Bigl(P_1^{\sigma_1},w_q(P_1)^{\sigma_1},P_2^{\sigma_2},w_p(P_2)^{\sigma_2}\Bigl)\biggl|^2\Biggl)^{\frac{4}{w_1w_2}},
\end{equation}
where $G=\textup{Pic}(K_1)\times\textup{Pic}(K_2). $
\end{proposition}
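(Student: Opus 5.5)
We will pass from the Galois-theoretic norm defining $\mathcal{J}_N(D_1,D_2)$ to a product over $G=\textup{Pic}(K_1)\times\textup{Pic}(K_2)$ by means of the Shimura reciprocity law. We then replace the Frobenius-twisted points $P_i'=w_p(\textup{Frob}_p(P_i))$ by Atkin--Lehner translates of Galois conjugates. The plan has four steps.

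\textbf{Step 1: the Galois action.} Write $H_1H_2$ for the compositum of the Hilbert class fields. Its Galois group over $L=K_1K_2$ is identified with $G$ via Artin reciprocity, since $K_1$ and $K_2$ have coprime discriminants and $H_1\cap H_2=\mathbb{Q}$ up to genus considerations. By Shimura reciprocity, $\textup{Art}(\sigma_i)$ acts on $j_N(P_i)$ as $j_N(P_i^{\sigma_i})$, where $P_i^{\sigma_i}$ is the CM point of the embedding $\sigma_i\mid\alpha_i$ of (\ref{action of Pic}), possibly up to inversion of $\sigma_i$, which is harmless. The Atkin--Lehner involutions are defined over $\mathbb{Q}$, so they commute with this action.

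\textbf{Step 2: the Frobenius.} We express $\textup{Frob}_p(P_i)$ without reference to $p$-adic embeddings. Since $p$ is inert in $K_i$, the prime $p\mathcal{O}_i$ is principal, so $\textup{Frob}_p$ on $H_i$ restricted to $K_i$ is complex conjugation $c$. Thus $\textup{Frob}_p$ acts on $H_i$ as $c$ composed with an element of $\textup{Pic}(K_i)$ depending on the chosen embedding $H\hookrightarrow \QQ_{p^2}$. Complex conjugation sends the CM point of $\alpha_i$ to the point of $\overline{\alpha_i}$, which is a translate of $\alpha_i$ by some element of $W_N\times \textup{Pic}(K_i)$, using that this action is free and transitive. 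We must track this element via the orientation/reflex-prime bookkeeping of Lemma \ref{atkin lehner reflex prime}. The expected outcome is that, up to replacing $P_1$ or $P_2$ by a $G$-translate (which does not change the norm), $w_p\textup{Frob}_p$ acts as $w_q$ on $P_1$ and as $w_p$ on $P_2$, or a variant such that the cross-ratio becomes $J_N(P_1,w_q(P_1),P_2,w_p(P_2))$ or its inverse. If the bookkeeping instead produces $w_p$ on both points, we would use $w_N=w_pw_q$-invariance of the set of all CM points and reindex.

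\textbf{Step 3: the norm from $L$ to $\mathbb{Q}$.} The norm from $H_1H_2$ to $\mathbb{Q}$ equals the norm from $L$ to $\mathbb{Q}$ of a product over $G$. The conjugations of $L/\mathbb{Q}$ are complex conjugation and the conjugations of $K_1,K_2$. Complex conjugation pairs each factor with its complex conjugate, giving $|\cdot|^2$. The remaining conjugation, $\overline{\alpha_1}$ with $\alpha_2$ fixed, again maps the set of cross-ratios to itself up to inversion via Step 2. This produces a square, which together with the exponent $2/w_1w_2$ yields the exponent $4/w_1w_2$. Everything holds up to the overall $\pm1$ exponent already allowed in Theorem \ref{padicgz}.

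\textbf{Main obstacle.} The delicate step is Step 2: identifying precisely which Atkin--Lehner/class-group translate $\textup{Frob}_p$ and complex conjugation induce on CM points. The approach is to compare reflex primes. Complex conjugation on $\alpha_1$ alone swaps the reflex prime at both $p$ and $q$ (the Galois conjugate in $F$), and combined with Lemma \ref{atkin lehner reflex prime} and the invariance of reflex primes under $G$, this pins down the Atkin--Lehner component.
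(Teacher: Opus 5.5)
Your outline matches the paper's: Shimura reciprocity, $\textup{Frob}_p$ and complex conjugation acting on CM points as $w_N$ modulo $\textup{Pic}(K_i)$, the norm to $L$ as a product over $G$, complex conjugation giving $|\cdot|^2$, and the remaining conjugation giving a square. However, two of your steps fail as written.

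\textbf{Step 2.} The ``expected outcome'' cannot occur. $\textup{Frob}_p$ is a single Galois element, and your own orientation argument gives $\overline{\alpha_i}\in \textup{Pic}(K_i)\cdot w_N\alpha_i$ for \emph{both} $i$. Hence $P_i'\in \textup{Pic}(K_i)\cdot w_q(P_i)$ for $i=1,2$; this is Daas's Lemma 3.4.4, which the paper quotes. You never get $w_p$ on $P_2$.

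Your fallback of ``reindexing by $w_N$'' does not bridge $w_q(P_2)$ and $w_p(P_2)$, because reindexing the $K_2$-points by $w_N$ also moves $P_2$. Likewise, translating $P_1$ by $G$ does not address the fact that $P_1'$ is a $\textup{Pic}(K_1)$-translate of $w_q(P_1)$ while $P_1$ is held fixed.

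The missing observation is this. The cross-ratio is a product of four differences $j_N(X)-j_N(Y)$, with $X$ a $K_1$-point and $Y$ a $K_2$-point. Since $H_1\cap H_2=\QQ$, we have $\textup{Gal}(H_1H_2/\QQ)\cong\textup{Gal}(H_1/\QQ)\times\textup{Gal}(H_2/\QQ)$. Therefore $\textup{Nm}^{H_1H_2}_{\QQ}(j_N(X)-j_N(Y))$ is the product of $j_N(X')-j_N(Y')$ over $X'\in\textup{Pic}(K_1)\{X,w_NX\}$ and $Y'\in\textup{Pic}(K_2)\{Y,w_NY\}$. It depends only on these orbits. So each of the four points may be moved independently within its $\langle\textup{Pic}(K_i),w_N\rangle$-orbit; in particular $w_q(P_2)$ may be replaced by $w_p(P_2)=w_Nw_q(P_2)$.

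\textbf{Step 3.} Here ``up to inversion'' is not harmless. Put $z=\prod_{G}J_N(P_1^{\sigma_1},w_q(P_1)^{\sigma_1},P_2^{\sigma_2},w_p(P_2)^{\sigma_2})\in L$. Let $\rho_1\in\textup{Gal}(L/\QQ)$ be the element that is nontrivial on $K_1$ and trivial on $K_2$. Then $\textup{Nm}^L_{\QQ}(z)=|z|^2\,|\rho_1(z)|^2$.

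If $\rho_1(z)$ were $z^{-1}$, this norm would be $1$, not $|z|^4$, and the proposition would fail. It is an exact identity, so the $\pm1$ of Theorem \ref{padicgz} cannot absorb anything. You must prove $\rho_1(z)=z$.

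This is true, and the argument is as follows.
\begin{itemize}
\item A lift of $\rho_1$ acts as $w_N$ (mod $\textup{Pic}$) on $K_1$-points and trivially on $K_2$-points. Orbitwise, it sends $J_N(P_1,w_qP_1,P_2,w_pP_2)$ to $J_N(w_NP_1,w_pP_1,P_2,w_pP_2)$.
\item $w_p$ acts on $j_N$ by a $\QQ$-rational M\"obius transformation, so it preserves cross-ratios. This also holds for the orbit products, since the automorphy factors cancel. Applying $w_p$ gives $J_N(w_qP_1,P_1,w_pP_2,P_2)$.
\item The cross-ratio is invariant under swapping the first two and the last two arguments simultaneously. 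Hence this equals $J_N(P_1,w_qP_1,P_2,w_pP_2)$.
\end{itemize}
This gives $\textup{Nm}^{H_1H_2}_{\QQ}(\cdot)^{2/w_1w_2}=(|z|^2)^{4/w_1w_2}$, as claimed. It is the content behind the paper's remark that $J_N$ lives in a field of absolute degree $2h_1h_2$.
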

\begin{proof}
    Following \cite{Daas} Lemma 3.4.4, we have $P_i'\in \textup{Pic}(K_i)\cdot w_q(P_i)$. The Atkin--Lehner involution $w_N$ acts by Galois action on $j_N(P_i)$. Modulo the action of $ \textup{Pic}(K_i)$, it must act by complex conjugation. Since $J_N(P_1,w_q(P_1),P_2,w_p(P_2))$ only lives in a field of absolute degree $2h_1h_2$, we can rewrite $\mathcal{J}_N(D_1,D_2)$ as
\begin{equation}   
\mathcal{J}_N(D_1,D_2)=\textup{Nm}_L^{H_1H_2}\bigl(\bigl|J_N(P_1,w_q(P_1),P_2,w_p(P_2))\bigl|^2\bigl)^{\frac{4}{w_1w_2}}.
\end{equation}
Taking the norm over $L$ corresponds to taking the product over the action of $\textup{Pic}(K_1)\times\textup{Pic}(K_2)$. As a consequence of Lemma $2.5$ of \cite{BertoliniDarmon1996}, for any $\sigma_i\in\textup{Pic}(K_i)$, we have 
\[w_N\bigl(w_N(P_i)^{\sigma_i}\bigl)\in\textup{Pic}(K_i)\cdot P_i.\]
By taking the product on $G$, we can conclude.
\end{proof}
We already know from Corollary \ref{cor} and Proposition \ref{prop b1} that
\begin{equation}
c_1=b_1=-\log\Biggl(\prod_{\substack{x^2<D\\x^2\equiv D \bmod{4N}}}\mathbf{F}\biggl(\frac{D-x^2}{4N}\biggl)^{\delta(x)}\Biggl).
\end{equation}
We end the proof of Theorem \ref{padicgz} by proving the following proposition.
\begin{proposition}\label{valeur c1}
\begin{equation}
c_1=-\frac{4}{w_1w_2}\log\Biggl(\prod_{\substack{(\sigma_1,\sigma_2)\in G}}
\biggl|J_N\Bigl(P_1^{\sigma_1},w_q(P_1)^{\sigma_1},P_2^{\sigma_2},w_p(P_2)^{\sigma_2}\Bigl)\biggl|^2\Biggl).
\end{equation}
\end{proposition}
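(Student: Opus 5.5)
The plan is to start from Proposition \ref{Greenf}: for each $(\sigma_1,\sigma_2)\in G$, the quantity $\log|J_N(\dots)|^2$ is a signed combination of four Green's function values. These are
\[G_N(\tau_1,\tau_2),\quad G_N(w_q\tau_1,w_p\tau_2),\quad G_N(w_q\tau_1,\tau_2),\quad G_N(\tau_1,w_p\tau_2),\]
with $\tau_i$ replaced by the CM points of $\alpha[\sigma_1,\sigma_2]$. By Lemma \ref{atkin lehner reflex prime}, if $\alpha=(\alpha_1,\alpha_2)$ has reflex ideal $\mathbf{p}_1\mathbf{q}_1$, then the pairs corresponding to these four terms, namely $\alpha$, $\alpha[w_q,w_p]$, $\alpha[w_q,1]$ and $\alpha[1,w_p]$, have reflex ideals $\mathbf{p}_1\mathbf{q}_1$, $\mathbf{p}_2\mathbf{q}_2$, $\mathbf{p}_1\mathbf{q}_2$ and $\mathbf{p}_2\mathbf{q}_1$. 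The signs $+,+,-,-$ in Proposition \ref{Greenf} then match exactly the signs $(-1)^{i+j}$ in $c_1$. Since the $\textup{Pic}$-action does not change reflex primes, summing over $G$ sorts all terms by reflex ideal $\mathbf{p}_i\mathbf{q}_j$. It therefore suffices to show, for a fixed pair $\alpha$ with reflex ideal $\mathbf{p}_i\mathbf{q}_j$, that
\[\sum_{(\sigma_1,\sigma_2)\in G}G_{N,s}(\tau_1^{\sigma_1},\tau_2^{\sigma_2})=-\frac{w_1w_2}{2}\sum_{\substack{\nu\in\delta_F^{-1}\mathbf{p}_i\mathbf{q}_j\\ \nu'>0>\nu,\ \textup{Tr}(\nu)=1}}\sigma_{0,\chi}\bigl((\delta_F^{-1}\mathbf{p}_i\mathbf{q}_j)^{-1}(\nu)\bigr)\,Q_{s-1}(1-2\nu)\]
for $\textup{Re}(s)>1$, up to the normalization constant. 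Then I would take the signed sum and let $s\to1$: the polar parts $\textup{Res}_{G_N}/(s-1)$ cancel in the alternating sum over $i,j$ (there are $h_1h_2$ terms of each sign), so the regularized $G_N$ may be replaced by the limit of $G_{N,s}$. This yields $c_1$ times $-\frac{4}{w_1w_2}$ after tracking the factor $2$ in $c_1$ against the $-2$ in $g_s$.

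To prove the displayed identity, I unfold $G_{N,s}(\tau_1,\tau_2)=-2\sum_{\gamma\in R^1/\pm1}Q_{s-1}(\cosh d(\tau_1,\gamma\tau_2))$. By Proposition \ref{quadform}, with $\nu=\textup{det}_{F,\alpha}(\gamma)$ and $\textup{Nm}\gamma=1$, we get $\cosh d=1+\frac{|\tau_1-\gamma\tau_2|^2}{2\textup{Im}\tau_1\textup{Im}\gamma\tau_2}=1-2\nu$. Also $\textup{Tr}(\nu)=1$, and $\nu$ has negative norm by Remark \ref{remark sign det}; the lemma preceding that remark shows that the sign pattern $\nu<0<\nu'$ holds for the chosen square roots. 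Conversely, every $\gamma\in R$ with $\textup{Tr}\,\textup{det}_{F,\alpha}(\gamma)=1$ has norm $1$, so $R^1$ equals the set of $\gamma\in R$ with $\textup{Tr}(\textup{det}_{F,\alpha}\gamma)=1$. Integrality should give $\textup{det}_{F,\alpha}(R)\subset\delta_F^{-1}\mathbf{p}_i\mathbf{q}_j$, as in the definite case of \cite{Daas}. Passing from $R^1/\pm1$ to $\mathcal{O}_1^\times\backslash R^1/\mathcal{O}_2^\times$ multiplies counts by $w_1w_2/2$, because $\textup{det}_{F,\alpha}$ is invariant under the unit action (units have norm $1$ in $L$ over $F$) and the action is free modulo $\pm1$. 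Proposition \ref{counting thm} then counts the triples $(\gamma,\sigma_1,\sigma_2)$ with a given $\nu$ as $\sigma_{0,\chi}((\delta_F^{-1}\mathbf{p}_i\mathbf{q}_j)^{-1}(\nu))$. It remains to identify the CM points of $\alpha[\sigma_1,\sigma_2]$ with $\tau_i^{\sigma_i}$, which follows from the definition (\ref{action of Pic}).

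The step I expect to be the main obstacle is the bookkeeping of constants and orientations. This includes the factor $w_1w_2/2$ together with the precise sign, whether $\nu$ or $\nu'$ is the negative conjugate (which is tied to the choice of $\sqrt{D_i}$, and harmless by the $\nu\leftrightarrow\nu'$ symmetry), and checking that the polar terms really cancel. I also need to justify interchanging the sum over $\gamma$ with the regrouping by $\nu$, which is fine for $\textup{Re}(s)>1$ by absolute convergence, before taking the limit.
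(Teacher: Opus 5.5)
Your proposal is correct and follows the paper's proof essentially step by step. Both arguments unfold $G_{N,s}$ via $\textup{det}_{F,\alpha}$, count over the $\textup{Pic}(K_1)\times\textup{Pic}(K_2)$-orbit with Proposition \ref{counting thm}, sort the four Atkin--Lehner terms by reflex ideal using Lemma \ref{atkin lehner reflex prime}, and conclude with Proposition \ref{Greenf}. You also make explicit two points the paper leaves implicit: the polar terms $\textup{Res}_{G_N}/(s-1)$ cancel in the $+,+,-,-$ combination, and $\textup{Tr}_{F/\mathbb{Q}}\,\textup{det}_{F,\alpha}(\gamma)=1$ forces $\gamma\in R^1$.

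One bookkeeping correction is needed. The fibres of $R^1/\{\pm1\}\to\mathcal{O}_1^\times\backslash R^1/\mathcal{O}_2^\times$ have size $w_1w_2/4$, not $w_1w_2/2$: the stabilizer of $\gamma$ in $\mathcal{O}_1^\times\times\mathcal{O}_2^\times$ is $\{(1,1),(-1,-1)\}$, and $(-1,1)$ acts as $-1$. It is this factor times the $-2$ in $g_s$ that gives the $-\frac{w_1w_2}{2}$ in your displayed identity.
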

\begin{proof}

Let $\alpha=(\alpha_1,\alpha_2)$ an embedding with reflex ideal $\mathbf{pq}$. Let $\tau_i\in\CC$ taken
as in Proposition \ref{quadform}. By the same Proposition, we know that
\[\frac{\mid \tau_1-\gamma\tau_2\mid^2}{2\text{\normalfont{Im}}(\tau_1)\text{\normalfont{Im}}(\gamma\tau_2)} =-2\textup{det}_{F,\alpha}(\gamma).
\]
By changing $\tau_i$ to $\overline{\tau_i}$, we have $-2\textup{det}_{F,\alpha}(\gamma)'$ instead. We can assume without loss of generality that both $\tau_1,\tau_2$ are in $\mathbf{H}$. We evaluate the Green's function $G_N$ given by Equation (\ref{eq green function}) at $\tau_1,\tau_2$.
 When Re$(s)>1$, using the invariance of $\textup{det}_{F,\alpha}$ by $\mathcal{O}_i^\times$, we have
\begin{equation}
\begin{split}
G_{N,s}(\tau_1,\tau_2)=-\frac{\omega_1\omega_2}{2}\sum_{\gamma\in \mathcal{O}_1^\times\backslash R^1/ \mathcal{O}_2^\times}Q_{s-1}(1-2 \textup{det}_{F,\alpha}(\gamma)).
\end{split}
\end{equation}
We know by Proposition \ref{counting thm} that for $\gamma\in R^1$, 
\begin{equation}
    \nu\coloneqq \textup{det}_{F,\alpha}(\gamma)\in\delta_F^{-1}\mathbf{p}\mathbf{q}\quad\textup{and}\quad \textup{Tr}_{F/\mathbb{Q}}(\nu)=1.
\end{equation}
 Furthermore, $\nu<0$ by the explicit expression of $\textup{det}_{F,\alpha}$ and $\nu'>0$ by Remark \ref{remark sign det}. We now sum over $\nu$ and count how many times each $\nu$ appears in the sum. Instead of taking only one embedding, we will sum on $\alpha[\sigma_1,\sigma_2]$ for 
$\sigma_i\in\textup{Pic}(K_i)$, $i=1,2$. We write $G_{N,s}(\alpha)\coloneqq G_{N,s}(\tau_1,\tau_2)$ with $\tau_i\in\mathbf{H}$ the fixed point of $\alpha_i$. Using Proposition \ref{counting thm} to count how many times each $\nu$ appears, we get
\begin{equation}
\sum_{(\sigma_1,\sigma_2)\in G}G_{N,s}(\alpha[\sigma_1,\sigma_2])
= -\frac{\omega_1\omega_2}{2}\sum_{\substack{\nu\in \delta_F^{-1}\mathbf{p}\mathbf{q}\\ \nu'>0>\nu\\ \textup{Tr}_{F/\mathbb{Q}}(\nu)=1}}\sigma_{0,\chi}((\delta_F^{-1}\mathbf{p}\mathbf{q})^{-1}(\nu))Q_{s-1}(1-2\nu).
\end{equation}
Instead of taking only $G_{N,s}(\alpha)$, let's do the same computation for 
\begin{equation}\label{eq define mathcal G}
\mathcal{G}_{N,s}(\alpha)\coloneqq G_{N,s}(\alpha)+G_{N,s}(\alpha[w_q,w_p])-G_{N,s}(\alpha[w_q,1])-G_{N,s}(\alpha[1,w_p]).
\end{equation}
By Lemma \ref{atkin lehner reflex prime}, $\alpha[w_q,w_p]$, $\alpha[1,w_p]$, $\alpha[w_q,1]$  have, respectively, $\mathbf{p}'\mathbf{q}'$, $\mathbf{p}'\mathbf{q}$, $\mathbf{p}\mathbf{q}'$ for reflex ideal. 
Taking $\mathbf{p}\mathbf{q}=\mathbf{p}_1\mathbf{q}_1$ and summing on the action of $\textup{Pic}(K_1)\times\textup{Pic}(K_2)$ gives us
\begin{align*}
\lim_{s\rightarrow1}\sum_{(\sigma_1,\sigma_2)\in G}\mathcal{G}_{N,s}(\alpha)^{(\sigma_1,\sigma_2)}=-\frac{w_1w_2}{4}c_1.
\end{align*}
Where $\sigma_i$ acts on the argument of $G_{N,s}$ in Equation (\ref{eq define mathcal G}). We finally call Proposition \ref{Greenf} relating the values of $\mathcal{G}_N$ and $J_N$
\[c_1=-\frac{4}{w_1w_2}\log\Biggl(\prod_{\substack{(\sigma_1,\sigma_2)\in G}}
\biggl|J_N\Bigl(P_1^{\sigma_1},w_q(P_1)^{\sigma_1},P_2^{\sigma_2},w_p(P_2)^{\sigma_2}\Bigl)\biggl|^2\Biggl)=-\log\Bigl(\mathcal{J}_N(D_1,D_2)\Bigl).\]
 This concludes the proof of Proposition \ref{valeur c1}. 
\end{proof}
We can finally end the proof of Theorem~\ref{padicgz} by invoking Corollary~\ref{cor} and Proposition~\ref{prop b1}:
\[-\log\Bigl(\mathcal{J}_N(D_1,D_2)\Bigl)=c_1=b_1=-\log\Biggl(\prod_{\substack{x^2<D\\x^2\equiv D \bmod{4N}}}\mathbf{F}\biggl(\frac{D-x^2}{4N}\biggl)^{\delta(x)}\Biggl).\]
As the arguments on each side that are positive reals, we can exponentiate to conclude.
\qed
\begin{remark}
    Choosing the reflex primes $\mathbf{p}$ and $\mathbf{q}$ amounts to exactly choosing a square root of $D\mod 2N$. Choosing $\delta(x)$ such that our choice of root matches the one given by the reflex primes appearing here lets us lift the power ambiguity $\pm1$ appearing in Theorem \ref{padicgz}.
\end{remark}
\section{\texorpdfstring{Interpretation of the higher coefficients $a_m$}{Interpretation of the higher coefficients aₙ}}
So far, we only used the coefficient $a_1$ from Theorem \ref{thm mod}. We end this paper by making sense of the general coefficient $a_m$ in two cases. First, we use them to give a factorization of the cross-ratio with the action of the Hecke operators (see Theorem \ref{Hecke}). Then, for general $N$, we show that $a_m$ is the height pairing of CM points on the Shimura curves (Theorem \ref{f heights}).
\subsection{\texorpdfstring{When $N=6,10,22$}{When N=6,10,22}}
For $m$ prime to $N=6,10,22$, we define the action of the Hecke operators on $\mathcal{J}_N$. Let $R^m\subset R $ be the elements of norm $m$ in $R$,
\[\mathcal{J}_N^m(D_1,D_2)\coloneqq \prod_{\gamma\in R^1 \backslash R^m}\textup{Nm}_\mathbb{Q}^{H_1H_2}\Bigl(J_N(P_1,w_q(P_1),\gamma P_2,\gamma w_p(P_2))\Bigl)^{\frac{2}{w_1w_2}}.\]
We have the following factorization.
\begin{theorem}\label{Hecke}
    Let $N=6,10,22$ and $m$ prime to $N$, with the same notation as Theorem \ref{padicgz},
\begin{equation}
\mathcal{J}_N^m(D_1,D_2)^{\pm1}=\prod_{\substack{x^2<m^2D\\x^2\equiv m^2D \bmod{4N}}}\mathbf{F}\biggl(\frac{m^2D-x^2}{4N}\biggl)^{\delta_m(x)}.
\end{equation}
Where $\delta_m(x)\coloneqq \delta(\frac{x}{m})$ for $x^2\equiv m^2D [4N]$.
\end{theorem}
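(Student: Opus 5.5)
We follow the proof of Theorem \ref{padicgz}, replacing the first Fourier coefficient by the $m$-th one. Since $f=0$ by Corollary \ref{cor}, we have $a_m=0$, that is $b_m=c_m$, for every $m$ prime to $N$. The plan is to identify $b_m$ with minus the logarithm of the right-hand side and $c_m$ with minus the logarithm of $\mathcal{J}_N^m(D_1,D_2)$.

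\textbf{The term $b_m$.} Write $\nu=\frac{n+m\sqrt D}{2\sqrt D}$ with $n\equiv mr_{i,j}\bmod 2N$, using the bijection from the proof of Proposition \ref{thm mod}. Then $\nu\gg0$ means $n^2<m^2D$, and $\textup{Nm}(\nu)=\frac{m^2D-n^2}{4D}$. The condition $\nu\in\delta_F^{-1}\mathbf{p}_i\mathbf{q}_j$ selects the residue $n\equiv m r_{i,j}\bmod 2N$. Setting $x=n$, this says $x/m\equiv \pm a$ or $\pm b$ modulo $2N$, which gives the sign $\delta(x/m)=\delta_m(x)$ after the signed sum over $i,j$. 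Next I would evaluate $\sigma'_\chi$ of the ideal $(\delta_F^{-1}\mathbf{p}_i\mathbf{q}_j)^{-1}(\nu)$, of norm $\frac{m^2D-x^2}{4N}$. Exactly as in \cite{Daas} Proposition 1.6.1 (the argument behind Proposition \ref{prop b1}), this gives $-\log \mathbf{F}\bigl(\frac{m^2D-x^2}{4N}\bigr)$. That argument is purely local at primes dividing the norm and does not use $m=1$. Hence
\[b_m=-\log\prod_{\substack{x^2<m^2D\\x^2\equiv m^2D \bmod{4N}}}\mathbf{F}\Bigl(\frac{m^2D-x^2}{4N}\Bigr)^{\delta_m(x)}.\]

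\textbf{The term $c_m$.} I would replace the Green's function by the Hecke-translated one,
\[G^m_{N,s}(\tau_1,\tau_2)=\sum_{\gamma\in R^m/\{\pm1\}}g_s(\tau_1,\gamma\tau_2)=\sum_{\beta\in R^1\backslash R^m}G_{N,s}(\tau_1,\beta\tau_2).\]
For $\gamma\in R^m$, Proposition \ref{quadform} gives $\textup{Tr}(\textup{det}_{F,\alpha}(\gamma))=m$. Since $\textup{Nm}(\gamma)=m$, the argument of $g_s$ becomes $1+\frac{|\tau_1-\gamma\tau_2|^2}{2\textup{Im}\tau_1\textup{Im}\gamma\tau_2}=1-\frac{2\nu}{m}$. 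Applying Proposition \ref{counting thm} to elements $\gamma\in R$ of norm $m$, together with the sign analysis of Remark \ref{remark sign det}, we get
\[\sum_{G}G^m_{N,s}(\alpha[\sigma_1,\sigma_2])=-\frac{w_1w_2}{2}\sum_{\substack{\nu\in\delta_F^{-1}\mathbf{pq},\ \nu'>0>\nu\\ \textup{Tr}\,\nu=m}}\sigma_{0,\chi}\bigl((\delta_F^{-1}\mathbf{pq})^{-1}(\nu)\bigr)Q_{s-1}\Bigl(1-\frac{2\nu}{m}\Bigr).\]
Forming the same alternating combination $\mathcal{G}$ as in (\ref{eq define mathcal G}), using Lemma \ref{atkin lehner reflex prime}, and taking $s\to1$ yields $c_m$ up to the factor $-\frac{w_1w_2}{4}$. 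The residues cancel in the alternating sum.

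\textbf{Conclusion.} Proposition \ref{Greenf} is applied with $\tau_2$ replaced by $\beta\tau_2$, $\beta\in R^1\backslash R^m$. This requires that $w_p$ commutes with the Hecke correspondence, i.e. $\beta w_p(\tau_2)$ and $w_p(\beta\tau_2)$ range over the same $R^1$-orbits as $\beta$ varies. This holds because the normaliser element of norm $p$ normalises $R^m$ and $R^1$. With this, $c_m=-\log \mathcal{J}^m_N(D_1,D_2)$. The conversion from the product over $G$ to the norm $\textup{Nm}^{H_1H_2}_\mathbb{Q}$ goes as in Proposition \ref{rewriting J_n}. The sign $\pm1$ in the exponent reflects the choice of reflex primes versus $\delta$.

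\textbf{Main obstacle.} I expect the main obstacle to be the counting step: Proposition \ref{counting thm} is stated for $\gamma\in R$, and I must check that the elements of norm $m$, taken modulo $\mathcal{O}_1^\times\times\mathcal{O}_2^\times$, are counted correctly against the sum over $R^1\backslash R^m$. The cleanest route is to note that $\gamma\in R$ with $\textup{Tr}(\textup{det}_{F,\alpha}(\gamma))=m$ are exactly those with $\textup{Nm}\,\gamma=m$, so the count holds verbatim for trace $m$.
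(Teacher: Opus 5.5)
Your proposal is correct and follows the paper's proof. You get $b_m=c_m$ from $f=0$ (Corollary \ref{cor}) and identify $b_m$ via Daas's Proposition 1.6.1. You obtain $c_m$ by unfolding $\sum_{\beta\in R^1\backslash R^m}G_{N,s}(\tau_1,\beta\tau_2)$ into a sum over $R^m/\{\pm1\}$, where $\textup{det}_{F,\alpha}$ now has trace $m$, then counting with Proposition \ref{counting thm} and forming the alternating Atkin--Lehner combination before taking $s\to1$.

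Your check that $w_p$ commutes with the Hecke correspondence is a detail the paper leaves implicit. It can also be bypassed: the definition of $\mathcal{J}_N^m$ already uses $\beta\, w_p(P_2)$, and the argument of Proposition \ref{Greenf} works for any pair of points.
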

\begin{proof}
Let $\alpha=(\alpha_1,\alpha_2)$ a pair of embeddings with reflex ideal $\mathbf{pq}$. Let $\tau_i\in \mathbf{H}$ be the fixed point of $\alpha_i$. 
   We already proved the case $m=1$. Let $m$ be prime to $N,$ by Corollary \ref{cor}, we know that $b_m=c_m$. Using Proposition 1.6.1 from \cite{Daas}, we know that $b_m$ equals the right side of the equality.
  To deal with $c_m$, we write for Re$(s)>1$,
   \begin{equation}
   \begin{split}
   \sum_{\gamma_1\in R^1 \backslash R^m}G_{N,s}(\tau_1,\gamma_1\tau_2)=\sum_{\gamma\in R^m/\{\pm1\}}-2Q_{s-1}\Biggl(1+\frac{\mid \tau_1-\gamma\tau_2\mid^2}{2\text{\normalfont{Im}}(\tau_1)\text{\normalfont{Im}}(\gamma\tau_2)}\Biggl).
    \end{split}
   \end{equation}
  We are now exactly in the case of the proof of Proposition \ref{valeur c1}. The fact that the sum is taken over $\gamma\in R^m$ means that 
  \begin{equation}
\nu\coloneqq \textup{det}_{F,\alpha}(\gamma)=-m\frac{\mid \tau_1-\gamma\tau_2\mid^2}{4\text{\normalfont{Im}}(\tau_1)\text{\normalfont{Im}}(\gamma\tau_2)}
\end{equation}
  is of trace $m$. Following the steps of the proof of Proposition \ref{valeur c1}, we sum over the action of $\textup{Pic}(K_1)\times\textup{Pic}(K_2)$ and apply Proposition \ref{counting thm} to prove that \begin{equation}
c_m=\mathcal{J}_N^m(D_1,D_2).
\end{equation}
\end{proof}

\begin{remark} 
 As in \cite{GrossKohnenZagier87} \S3.2, Proposition~1, one can construct a modular form 
$ g \in S_k(\Gamma_0(N))$ of higher weight in the same manner as \( f \). 
The argument in the proof of Theorem~\ref{Hecke} shows that a sum of values of the higher Green's function $G_{N,k}$ evaluated at CM points on the Shimura curve  $X_N$  appears in the Fourier coefficients of $g$. The modular curve case of this statement is given by \cite{GrossKohnenZagier87} \S5, Theorem 2. 
Applying a suitable Hecke operator that annihilates $S_k(\Gamma_0(N))$, we deduce a rationality result for this sum. 
This is a special case corresponding to averaging over the class group of the more general results of \cite{Li_higher_green_function}.

\end{remark}

\subsection{\texorpdfstring{For general $N$}{For general N}}
We end this paper by relating the Fourier coefficients of $f$ from Theorem \ref{thm mod} with the height pairing of some points on the Shimura curve. We first recall the case of the modular curve done in \cite{GrossKohnenZagier87}.
\par
Let $X_0(N)=\Gamma_0(N)\backslash\overline{\mathbf{H}}$ be the modular curve of level $N$ and let $w_N$ be the Atkin Lehner involution. Let
$\langle\,\cdot \, , \cdot \, \rangle_{X_0(N)}$ be the Néron's height pairing on the Jacobian of $X_0(N)/w_N$.
We define \[P_D^*\coloneqq P_{D,a}^*-P_{D,b}^*\]
following the notation of \cite{GrossKohnenZagier87} $\S4.1$.
From the same section, we know that the function $f$ of Theorem \ref{thm mod} can be written, up to an oldform, as
\begin{equation}\label{GKZ heights}
    f=\sum_{m>0}\langle P_{D_1},T_m(P_{D_2})\rangle_{X_0(N)} q^m.
\end{equation}
Note that the results of \cite{GrossKohnenZagier87} are under the hypothesis that $p$ and $q$ are split in $K_i$. 
\par
In the case of the Shimura curve (thus, with $p,q$ inert in $K_i$), the result will be very similar. Let $\langle\,\cdot \, , \cdot \, \rangle_N$ be the Néron's height pairing for the Shimura curve $X_N(\CC)$. We decompose $\langle\,\cdot \, , \cdot \, \rangle_N$ into its archimedean and non archimedean part
\[\langle\,\cdot \, , \cdot \, \rangle_N=\langle\,\cdot \, , \cdot \, \rangle_{N,\infty}+\sum_{p}\langle\,\cdot \, , \cdot \, \rangle_{N,p}.\]
Let's define the divisors we are interested in.
Let $K_1,K_2$ define as before and $P_i$ the CM point associated to an embedding of $K_i$. Define
 \[P_1\coloneqq \frac{2}{w_1}\sum_{\sigma_1\in\textup{Pic}(K_1)}(P_1-w_q(P_1))^{\sigma_1}\]
 and
\[P_2\coloneqq \frac{2}{w_2}\sum_{\sigma_2\in\textup{Pic}(K_2)}(P_2-w_p(P_2))^{\sigma_2}.\]
\begin{proposition}  For $m $ prime to $N$, let $T_m$ be the $m$-th Hecke operator, then
\begin{equation}
c_m=-\langle P_1,T_mP_2\rangle _{N,\infty}.
\end{equation}
\end{proposition}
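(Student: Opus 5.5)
We will prove that $c_m=-\langle P_1,T_mP_2\rangle_{N,\infty}$ by expressing the archimedean height pairing through the Green's function $G_N$ of Subsection~\ref{subsection green function}, and then evaluating $G_N$ at CM points as in the proofs of Proposition~\ref{valeur c1} and Theorem~\ref{Hecke}.

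First we recall the standard description of the archimedean local height on a compact curve. For divisors $a,b$ of degree $0$ with disjoint support, $\langle a,b\rangle_{N,\infty}$ is given, up to the normalization of the local height (a factor $\pm\tfrac12$ or $\pm1$, fixed once and for all), by $\sum_{x,y}a_xb_y\,G_N(x,y)$. Here we use that $G_N$ is a Green's function on $X_N(\mathbb{C})$: it is harmonic off the diagonal and has a logarithmic singularity of the right type along it. Any ambiguity in the Green's function is a function that is constant in each variable, and such terms are killed by the degree-zero condition. Note that $P_1$ is a combination of $P_1^{\sigma_1}-w_q(P_1)^{\sigma_1}$, and $T_mP_2$ is a combination of $\gamma(P_2^{\sigma_2})-\gamma(w_p(P_2)^{\sigma_2})$ with $\gamma\in R^1\backslash R^m$; both have degree $0$. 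Since $p,q$ are inert in $K_i$ and $K_1\neq K_2$, their supports are disjoint. The regularization $G_N=\lim_{s\to1}(G_{N,s}-\mathrm{Res}_{G_N}/(s-1))$ is harmless here, because in the signed combination the residue term cancels. We may therefore write $\langle P_1,T_mP_2\rangle_{N,\infty}$ as $\lim_{s\to1}$ of the corresponding signed sum of $G_{N,s}$.

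Expanding the divisors gives
\[
\frac{4}{w_1w_2}\sum_{(\sigma_1,\sigma_2)\in G}\sum_{\gamma\in R^1\backslash R^m}\Bigl(G_{N,s}(\tau_1,\gamma\tau_2)+G_{N,s}(w_q\tau_1,\gamma w_p\tau_2)-G_{N,s}(w_q\tau_1,\gamma\tau_2)-G_{N,s}(\tau_1,\gamma w_p\tau_2)\Bigr)^{(\sigma_1,\sigma_2)}.
\]
This is exactly the $\mathcal{G}_{N,s}$ combination of Equation~(\ref{eq define mathcal G}), twisted by the Hecke correspondence.

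Next, as in the proof of Theorem~\ref{Hecke}, we unfold
\[
\sum_{\gamma_1\in R^1\backslash R^m}G_{N,s}(\tau_1,\gamma_1\tau_2)=\sum_{\gamma\in R^m/\{\pm1\}}-2Q_{s-1}\bigl(1-2\,\textup{det}_{F,\alpha}(\gamma)/m\bigr),
\]
using Proposition~\ref{quadform}. For $\gamma\in R^m$ the element $\nu=\textup{det}_{F,\alpha}(\gamma)$ lies in $\delta_F^{-1}\mathbf{pq}$, has trace $m$, and satisfies $\nu<0<\nu'$. Proposition~\ref{counting thm} counts each such $\nu$ with multiplicity $\sigma_{0,\chi}((\delta_F^{-1}\mathbf{pq})^{-1}(\nu))$ after summing over $G$ and over $\mathcal{O}_1^\times\backslash\cdot/\mathcal{O}_2^\times$; this last sum produces the factor $w_1w_2/4$. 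Lemma~\ref{atkin lehner reflex prime} identifies the reflex ideals of the three Atkin--Lehner twisted pairs as $\mathbf{p}'\mathbf{q}'$, $\mathbf{p}'\mathbf{q}$ and $\mathbf{p}\mathbf{q}'$. This reproduces the signs $(-1)^{i+j}$ in the formula for $c_m$ of Proposition~\ref{thm mod}. Note that in $c_m$ the condition is $\nu>0>\nu'$ up to conjugation, which by the symmetry discussed after Proposition~\ref{quadform} is harmless.

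The factor $4/(w_1w_2)$ coming from the divisors cancels against $w_1w_2/4$, and the $-2$ in $g_s$ matches the factor $2$ in $c_m$ together with the overall minus sign. This yields $c_m=-\langle P_1,T_mP_2\rangle_{N,\infty}$.

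The main obstacle is the first step: pinning down the normalization of the archimedean Néron pairing in terms of $G_N$, including the sign and the factor $2$. We also need to check that $G_N$, which is a priori defined on $\mathbf{H}$, descends correctly when the Hecke correspondence is written as $\gamma\in R^1\backslash R^m$. For genus $0$ this can be checked against Proposition~\ref{Greenf}, where $\log|j_N(x)-j_N(y)|^2$ is the standard archimedean local height. For general $N$, we would invoke the characterization of the archimedean height by Green's functions (Gross, \emph{Local heights on curves}) and the fact that $G_N$ is normalized so that its constant term is irrelevant on degree-zero divisors.
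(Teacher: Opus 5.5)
Your proposal is correct and follows the paper's proof. The paper likewise fixes the normalization $G_N(P_1,P_2)=\langle P_1-Q,P_2-Q\rangle_\infty$, writes $\langle P_1-w_q(P_1),P_2-w_p(P_2)\rangle_{N,\infty}$ as the signed combination $G_N(P_1,P_2)+G_N(w_q(P_1),w_p(P_2))-G_N(w_q(P_1),P_2)-G_N(P_1,w_p(P_2))$, sums over $\textup{Pic}(K_1)\times\textup{Pic}(K_2)$ and invokes Proposition~\ref{valeur c1}; the only difference is that it treats $m=1$ and appeals to the linearity of $T_m$, whereas you carry out the unfolding over $R^m/\{\pm1\}$ directly, as in the proof of Theorem~\ref{Hecke}, which makes the same computation explicit.
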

\begin{proof}
Let $G_N$ be the Green's function as defined in Subsection \ref{subsection green function}. There exists a $Q\in X_N(\CC)$ such that for all  $P_1\neq P_2$ different from $Q$, 
    \[G_N(P_1,P_2)=\langle P_1-Q,P_2-Q\rangle _\infty.\]
    By bi-linearity and by adding and subtracting $Q$, we have
   \begin{equation}
   \begin{split}
        \langle P_1-w_q(P_1),P_2-w_p(P_2)\rangle _{N,\infty}&
    =G_N(P_1,P_2)+G_N(w_q(P_1),w_p(P_2))
    \\&-G_N(w_q(P_1),P_2)-G_N(P_1,w_p(P_2)).
         \end{split}
        \end{equation}
We sum on $\textup{Pic}(K_i)$ and invoke Proposition \ref{valeur c1}
to conclude in the case $m=1$. The general case is done in the same way using the linearity of $T_m.$
\end{proof}
For finite places, we invoke the main Theorem of \cite{anphil} and \cite{Daas} Theorem $3.2.5$. 
\begin{proposition} Let $m$ be prime to $N$,
\begin{equation}
b_m=-\sum_{\substack{x^2<m^2D\\x^2\equiv m^2D [4N]}}\log \biggl(\mathbf{F}\biggl(\frac{m^2D-x^2}{4N}\biggl)^{\delta_m(x)}\biggl)=\sum_p\langle P_1,T_mP_2\rangle _{N,p}.
\end{equation}
\end{proposition}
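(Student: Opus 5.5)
The first equality is the computation of $b_m$ already made in the proof of Theorem \ref{Hecke}: Proposition 1.6.1 of \cite{Daas} expresses $b_m$ as $-\sum\log\mathbf{F}\bigl(\frac{m^2D-x^2}{4N}\bigr)^{\delta_m(x)}$. The sign conventions are the same as in Proposition \ref{prop b1}, with $\delta_m(x)=\delta(x/m)$; this makes sense because $m$ is prime to $2N$, so $x/m$ is a square root of $D$ modulo $2N$. I will simply invoke this and check that the choice of reflex ideal $\mathbf{p}_1\mathbf{q}_1$ matches the square root $a$ fixed before Proposition \ref{prop b1}.

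For the second equality I work on an integral model of $X_N$ over $\mathbb{Z}$ and decompose $\langle P_1,T_mP_2\rangle_{N,\ell}$ prime by prime. There are two kinds of primes.
\begin{itemize}
\item For $\ell\nmid N$ the model has good reduction. By the Gross--Zagier/Gross--Keating type computation in \cite{anphil}, the local intersection of two CM divisors equals a weighted count of embeddings of $\mathcal{O}_1\otimes\mathcal{O}_2$ into the maximal order of the definite quaternion algebra ramified at $\ell$ and $\infty$, with elements of norm related to $m$. The length of the deformation ring contributes the factor $\frac{v_\ell(n)+1}{2}\log\ell$.
\item For $\ell\in\{p,q\}$ I use the Čerednik--Drinfeld uniformization. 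Here Theorem 3.2.5 of \cite{Daas} computes the intersection of CM points on the special fibre, a union of $\mathbb{P}^1$'s indexed by a Bruhat--Tits tree quotient, in terms of the same arithmetic data.
\end{itemize}

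The key step is to reorganize these counts using the $F$-quadratic form $\textup{det}_{F,\alpha}$ of Proposition \ref{quadform}, now on the definite algebra. Elements representing an intersection have $\textup{Tr}_{F/\mathbb{Q}}(\nu)=m$ and $\nu\gg0$ (cf. Remark \ref{remark sign det}). The analogue of Proposition \ref{counting thm} counts them by $\sigma_{0,\chi}$, and weighting by the local multiplicities turns $\sigma_{0,\chi}$ into the $\ell$-part of $\sigma'_\chi((\delta_F^{-1}\mathbf{p}\mathbf{q})^{-1}(\nu))$. Summing over $\ell$ then reconstructs $\sigma'_\chi$ exactly. The signed combination of divisors $P_1-w_q(P_1)$ and $P_2-w_p(P_2)$ corresponds, via Lemma \ref{atkin lehner reflex prime}, to the four reflex ideals $\mathbf{p}_i\mathbf{q}_j$ with signs $(-1)^{i+j}$. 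This recovers $b_m$ as defined in Proposition \ref{thm mod}, up to the overall sign, which should be fixed to match the archimedean statement.

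The main obstacle is the primes $p,q$ of bad reduction. The multiplicities there are not given by a Gross--Keating formula, and the degree-zero divisors must be corrected by vertical components so that Néron pairings equal arithmetic intersections. My first attempt is to show that the vertical correction terms cancel in the alternating sum over the four Atkin--Lehner translates. Then Daas's component-wise count can be matched directly with the $p$- and $q$-adic valuations appearing in $\mathbf{F}$.
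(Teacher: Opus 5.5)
Your overall plan matches the paper's. The paper gets the first equality from \cite{Daas} Proposition~1.6.1, exactly as in the proof of Theorem~\ref{Hecke}. It gets the second simply by invoking the main theorem of \cite{anphil} together with \cite{Daas} Theorem~3.2.5.

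\textbf{The definite algebra at good primes is wrong.} At a prime $\ell\nmid N$, the supersingular points of $X_N$ are governed by the endomorphisms of a supersingular QM abelian surface that commute with the $R$-action. These form a maximal order in the definite quaternion algebra ramified at $\{p,q,\ell,\infty\}$, not at $\{\ell,\infty\}$; the latter is the modular-curve situation of Gross--Zagier. With $B_{\ell,\infty}$, your reorganization step fails:
\begin{itemize}
\item that algebra is split at $p$ and $q$, so there are no reflex primes at $p,q$ (they were defined through $R/\omega_p\simeq\mathbf{F}_{p^2}$);
\item the values of $\textup{det}_{F,\alpha}$ are then not confined to $\delta_F^{-1}\mathbf{p}_i\mathbf{q}_j$;
\item your count would reproduce the level-one sum over $x^2\equiv m^2D \bmod 4$ with no weights, instead of the congruence modulo $4N$ weighted by $\delta_m(x)$.
\end{itemize}
The mechanism you describe (reflex ideals, Lemma~\ref{atkin lehner reflex prime}, signs $(-1)^{i+j}$) works only because $p$ and $q$ stay ramified in the algebra of discriminant $pq\ell$.

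\textbf{Two further steps are stated as intentions rather than proved.}

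First, at $p$ and $q$ you only propose to show that the vertical correction terms cancel in the alternating sum. This is not automatic. The Atkin--Lehner involutions permute the components of the \v{C}erednik--Drinfeld special fibre. There is no a priori reason for $\sum_\sigma(P_1-w_q(P_1))^{\sigma}$ or $\sum_\sigma(P_2-w_p(P_2))^{\sigma}$ to have degree zero on every component. You must either prove the cancellation or, as the paper does, rely on the cited results for these primes.

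Second, the overall sign is part of the statement and cannot be ``fixed to match the archimedean statement.'' Theorem~\ref{f heights} is obtained precisely by adding this proposition to the archimedean one. Choosing the sign so that $a_m=\langle P_1,T_mP_2\rangle_N$ comes out would therefore be circular. The sign has to be checked against the normalization of the local symbols and the choice of $a$ attached to $\mathbf{p}_1\mathbf{q}_1$ before Proposition~\ref{prop b1}.
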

Putting everything together, we have the following theorem.
\begin{theorem}\label{f heights}
    Let $f=\sum_m a_m\in S_2(\Gamma_0(N))$ be the modular form constructed in Theorem 
    \ref{thm mod}, then for $m>0$ prime to $N$
    \begin{equation}
        a_m=\langle P_1,T_mP_2\rangle_N.
    \end{equation}
\end{theorem}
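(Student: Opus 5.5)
The plan is to combine the two preceding propositions with the decomposition of the Néron height pairing into local contributions. By Proposition \ref{thm mod}, for $m$ prime to $N$ we have $a_m=b_m-c_m$. The height pairing splits as $\langle P_1,T_mP_2\rangle_N=\langle P_1,T_mP_2\rangle_{N,\infty}+\sum_p\langle P_1,T_mP_2\rangle_{N,p}$. So the theorem follows once the archimedean piece is identified with $-c_m$ and the sum of the finite pieces with $b_m$, which are exactly the two propositions just proved.

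The steps, in order, are as follows.

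First, check that the pairing makes sense. $P_1$ and $P_2$ are degree-zero divisors, since each is a signed sum $P-w(P)$ over a Galois orbit. $T_m$ preserves degree zero. So $\langle P_1,T_mP_2\rangle_N$ is the Néron pairing of divisors with disjoint support. For disjointness, note that $K_1\neq K_2$, so CM points of discriminant $D_1$ and of discriminant $D_2$ (together with their Hecke translates, which are CM by orders of $K_2$) never coincide. Hence the local decomposition applies, with only finitely many nonzero finite terms.

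Second, invoke the proposition giving $c_m=-\langle P_1,T_mP_2\rangle_{N,\infty}$. This rests on Proposition \ref{Greenf}, Proposition \ref{valeur c1} and the Hecke computation in the proof of Theorem \ref{Hecke}. The normalisations $\frac{2}{w_i}$ in the definition of $P_i$ are chosen so that the factor $\frac{4}{w_1w_2}$ matches.

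Third, invoke the proposition identifying $b_m$ with $\sum_p\langle P_1,T_mP_2\rangle_{N,p}$, which comes from the intersection computations of \cite{anphil} and \cite{Daas}.

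Finally, subtract to get $a_m=b_m-c_m=\sum_p\langle\cdot,\cdot\rangle_{N,p}+\langle\cdot,\cdot\rangle_{N,\infty}=\langle P_1,T_mP_2\rangle_N$.

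The main obstacle I expect is consistency of normalisations and signs between the two local identifications. Three things must agree: the Green's function $G_N$ normalised via $\mathrm{Res}_{G_N}$, the choice of the auxiliary point $Q$ in the archimedean proposition, and the sign conventions of the finite intersection numbers. The auxiliary point $Q$ is harmless because the divisors have degree zero, so the $Q$-terms cancel by bilinearity. The residue constant likewise drops out in the signed combination $\mathcal{G}_{N,s}$. I would also verify carefully that the reflex-ideal bookkeeping (Lemma \ref{atkin lehner reflex prime}) gives the same root choice $\delta_m$ on both sides, so that no global sign $\pm1$ survives. The statement can then be asserted without ambiguity, up to the sign convention fixed for $E_N$.
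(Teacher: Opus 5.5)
Your proposal is correct and is essentially the paper's own argument: it combines $a_m=b_m-c_m$ from Proposition~\ref{thm mod} with the archimedean identity $c_m=-\langle P_1,T_mP_2\rangle_{N,\infty}$ and the finite identity $b_m=\sum_p\langle P_1,T_mP_2\rangle_{N,p}$, then adds the local contributions. One small correction of attribution: for general $N$ the archimedean identity comes from $G_N(P,P')=\langle P-Q,P'-Q\rangle_{\infty}$ together with the Green's-function computation inside the proof of Proposition~\ref{valeur c1}, not from Proposition~\ref{Greenf}, which requires $X_N$ to have genus $0$.
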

\begin{remark}
  This modular form $f$, used in our proof and in \cite{GrossKohnenZagier87} sees both the heights on the modular curve $X_0(N)$ and the Shimura curve $X_N$ but never at the same time : in the modular case, the primes $p$ and $q$ are split in $K_i$, whereas in the Shimura curve case they are inert.
\end{remark}
\begin{remark}
    The modular form constructed in \cite{Daas} by ordinary projection (Proposition 6.3.1) should hold a similar interpretation in terms of $p$-adic Schneider height pairing, see Remark 2.1.3.
\end{remark}

\newpage
\bibliographystyle{alpha}
\bibliography{ref}
\end{document}